\newtheorem{theorem}{Theorem}[section]
\newtheorem{lemma}[theorem]{Lemma}
\newtheorem{corollary}[theorem]{Corollary}
\newcommand{\R}{\mathbb{R}}
\newcommand{\<}{\langle}
\renewcommand{\>}{\rangle}
\newcommand{\goto}{\rightarrow}
\newcommand{\cA}{\mathcal{A}}
\newcommand{\cP}{\mathcal{P}}
\newcommand{\cD}{\mathcal{D}}
\newcommand{\cF}{\mathcal{F}}
\newcommand{\cL}{\mathcal{L}}
\newcommand{\vct}[1]{\bm{#1}}
\newcommand{\mtx}[1]{\bm{#1}}
\newcommand{\rank}{\operatorname{rank}}
\newcommand{\diag}{\operatorname{diag}}
\newcommand{\trace}{\operatorname{trace}}
\numberwithin{equation}{section}
\newcommand{\mymathbf}[1]{\mbox{\boldmath$#1$}}
\newenvironment{proof}{\noindent\emph{Proof.}}{\hfill\fbox{}\vspace*{1mm}}
\title{A Singular Value Thresholding Algorithm for Matrix Completion}
\author{Jian-Feng Cai$^{\dagger}$ ~~Emmanuel J. Cand\`es$^{\sharp}$
  ~~ Zuowei Shen$^{\S}$\\
  \vspace{-.1cm}\\
  $\dagger$ Temasek Laboratories, National University of
  Singapore, Singapore 117543\\
  \vspace{-.3cm}\\
  $\sharp$ Applied and Computational Mathematics,
  Caltech, Pasadena, CA 91125\\
  \vspace{-.3cm}\\
  $\S$ Department of Mathematics, National University of Singapore,
  Singapore 117543}
\date{September 2008}
\begin{document}
\maketitle

\begin{abstract}
  This paper introduces a novel algorithm to approximate the matrix
  with minimum nuclear norm among all matrices obeying a set of convex
  constraints.  This problem may be understood as the convex
  relaxation of a rank minimization problem, and arises in many
  important applications as in the task of recovering a large matrix
  from a small subset of its entries (the famous Netflix problem).
  Off-the-shelf algorithms such as interior point methods are not
  directly amenable to large problems of this kind with over a million
  unknown entries.

  This paper develops a simple first-order and easy-to-implement
  algorithm that is extremely efficient at addressing problems in
  which the optimal solution has low rank.  The algorithm is iterative
  and produces a sequence of matrices $\{\mtx{X}^k,\mtx{Y}^k\}$ and at
  each step, mainly performs a soft-thresholding operation on the
  singular values of the matrix $\mtx{Y}^k$. There are two remarkable
  features making this attractive for low-rank matrix completion
  problems. The first is that the soft-thresholding operation is
  applied to a sparse matrix; the second is that the rank of the
  iterates $\{\mtx{X}^k\}$ is empirically nondecreasing. Both these
  facts allow the algorithm to make use of very minimal storage space
  and keep the computational cost of each iteration low.  On the
  theoretical side, we provide a convergence analysis showing that the
  sequence of iterates converges. On the practical side, we provide
  numerical examples in which $1,000 \times 1,000$ matrices are
  recovered in less than a minute on a modest desktop computer. We
  also demonstrate that our approach is amenable to very large scale
  problems by recovering matrices of rank about 10 with nearly a
  billion unknowns from just about 0.4\% of their sampled entries. Our
  methods are connected with the recent literature on linearized
  Bregman iterations for $\ell_1$ minimization, and we develop a
  framework in which one can understand these algorithms in terms of
  well-known Lagrange multiplier algorithms.
\end{abstract}

{\bf Keywords.} Nuclear norm minimization, matrix completion, singular
value thresholding, Lagrange dual function, Uzawa's algorithm.

\section{Introduction}

\subsection{Motivation}

There is a rapidly growing interest in the recovery of an unknown
low-rank or approximately low-rank matrix from very limited
information. This problem occurs in many areas of engineering and
applied science such as machine learning \cite{Abernethy06,
  Argyriou07, Amit07}, control \cite{Mesbahi97} and computer vision,
see \cite{Tomasi}. As a motivating example, consider the problem of
recovering a data matrix from a sampling of its entries. This
routinely comes up whenever one collects partially filled out surveys,
and one would like to infer the many missing entries. In the area of
recommender systems, users submit ratings on a subset of entries in a
database, and the vendor provides recommendations based on the user's
preferences. Because users only rate a few items, one would like to
infer their preference for unrated items; this is the famous Netflix
problem \cite{NetflixPrize}.  Recovering a rectangular matrix from a
sampling of its entries is known as the {\em matrix completion}
problem. The issue is of course that this problem is extraordinarily
ill posed since with fewer samples than entries, we have infinitely
many completions. Therefore, it is apparently impossible to identify
which of these candidate solutions is indeed the ``correct'' one
without some additional information.

In many instances, however, the matrix we wish to recover has low rank
or approximately low rank. For instance, the Netflix data matrix of
all user-ratings may be approximately low-rank because it is commonly
believed that only a few factors contribute to anyone's taste or
preference. In computer vision, inferring scene geometry and camera
motion from a sequence of images is a well-studied problem known as
the structure-from-motion problem. This is an ill-conditioned problem
for objects may be distant with respect to their size, or especially
for ``missing data'' which occur because of occlusion or tracking
failures. However, when properly stacked and indexed, these images
form a matrix which has very low rank (e.g.~rank 3 under orthography)
\cite{Tomasi,ChenSuter}.  Other examples of low-rank matrix fitting
abound; e.g.~in control (system identification), machine learning
(multi-class learning) and so on.  Having said this, the premise that
the unknown has (approximately) low  rank radically changes the problem,
making the search for solutions feasible since the lowest-rank
solution now tends to be the right one.

In a recent paper \cite{CR:XXX:08}, Cand\`es and Recht showed that
matrix completion is not as ill-posed as people thought. Indeed, they
proved that most low-rank matrices can be recovered {\em exactly} from
most sets of sampled entries even though these sets have surprisingly
small cardinality, and more importantly, they proved that this can
be done by solving a simple {\em convex} optimization problem. To
state their results, suppose to simplify that the unknown matrix
$\mtx{M} \in \R^{n \times n}$ is square, and that one has available
$m$ sampled entries $\{\mtx{M}_{ij} : (i, j) \in \Omega\}$ where
$\Omega$ is a random subset of cardinality $m$. Then \cite{CR:XXX:08}
proves that most matrices $\mtx{M}$ of rank $r$ can be perfectly
recovered by solving the optimization problem
\begin{equation}
  \label{eqn:min}
  \begin{array}{ll}
    \textrm{minimize}   & \quad \|\mtx{X}\|_*\\
    \textrm{subject to} & \quad X_{ij} = M_{ij}, \quad (i,j) \in \Omega,
 \end{array}
\end{equation}
provided that the number of samples obeys
\begin{equation}\label{conm}
m \ge C n^{6/5} r \log n
\end{equation}
for some positive numerical constant $C$.\footnote{Note that an $n
  \times n$ matrix of rank $r$ depends upon $r(2n-r)$ degrees of
  freedom.}  In \eqref{eqn:min}, the functional $\|\mtx{X}\|_*$ is the
nuclear norm of the matrix $\mtx{M}$, which is the sum of its singular
values. The optimization problem \eqref{eqn:min} is convex and can be
recast as a semidefinite program \cite{FazelThesis,fazelRank}. In some
sense, this is the tightest convex relaxation of the NP-hard rank
minimization problem
\begin{equation}
  \label{eq:rank}
  \begin{array}{ll}
    \textrm{minimize}   & \quad \text{rank}(\mtx{X})\\
    \textrm{subject to} & \quad X_{ij} = M_{ij}, \quad (i,j) \in \Omega,
 \end{array}
\end{equation}
since the nuclear ball $\{\mtx{X} : \|\mtx{X}\|_* \le 1\}$ is the
convex hull of the set of rank-one matrices with spectral norm bounded
by one. Another interpretation of Cand\`es and Recht's result is that
under suitable conditions, the rank minimization program
\eqref{eq:rank} and the convex program \eqref{eqn:min} are {\em
  formally equivalent} in the sense that they have exactly the same
unique solution.

\subsection{Algorithm outline}

Because minimizing the nuclear norm both provably recovers the
lowest-rank matrix subject to constraints (see \cite{Recht07} for
related results) and gives generally good empirical results in a
variety of situations, it is understandably of great interest to
develop numerical methods for solving \eqref{eqn:min}.  In
\cite{CR:XXX:08}, this optimization problem was solved using one of
the most advanced semidefinite programming solvers, namely, SDPT3
\cite{TTT:SDPT3}. This solver and others like
SeDuMi 
are based on interior-point methods, and are problematic when the size
of the matrix is large because they need to solve huge systems of
linear equations to compute the Newton direction. In fact, SDPT3 can
only handle $n \times n$ matrices with $n \le 100$. Presumably, one
could resort to iterative solvers such as the method of conjugate
gradients to solve for the Newton step but this is problematic as well
since it is well known that the condition number of the Newton system
increases rapidly as one gets closer to the solution. In addition,
none of these general purpose solvers use the fact that the solution
may have low rank. We refer the reader to \cite{VandenbergheNuc} for
some recent progress on interior-point methods concerning some special
nuclear norm-minimization problems.

This paper develops the {\em singular value thresholding} algorithm
for approximately solving the nuclear norm minimization problem
\eqref{eqn:min} and by extension, problems of the form
\begin{equation}
  \label{eqn:nuc_norm}
  \begin{array}{ll}
    \textrm{minimize}   & \quad \|\mtx{X}\|_*\\
    \textrm{subject to} & \quad {\cal A}(\mtx{X})  = \vct{b},
 \end{array}
\end{equation}
where ${\cal A}$ is a linear operator acting on the space of $n_1
\times n_2$ matrices and $\vct{b} \in \R^m$.  This algorithm is a
simple first-order method, and is especially well suited for problems
of very large sizes in which the solution has low rank.  We sketch
this algorithm in the special matrix completion setting and let
$\mathcal{P}_{\Omega}$ be the orthogonal projector onto the span of
matrices vanishing outside of $\Omega$ so that the $(i,j)$th component
of $\mathcal{P}_{\Omega}(\mtx{X})$ is equal to $X_{ij}$ if $(i,j) \in
\Omega$ and zero otherwise. Our problem may be expressed as
\begin{equation}
  \label{eqn:nuc_norm2}
  \begin{array}{ll}
    \textrm{minimize}   & \quad \|\mtx{X}\|_*\\
    \textrm{subject to} & \quad {\cal P}_\Omega(\mtx{X}) = {\cal P}_\Omega(\mtx{M}),
 \end{array}
\end{equation}
with optimization variable $\mtx{X} \in \R^{n_1 \times n_2}$. Fix
$\tau > 0$ and a sequence $\{\delta_k\}_{k \ge 1}$ of scalar step
sizes.  Then starting with $\mtx{Y}^0 = 0 \in \R^{n_1 \times n_2}$,
the algorithm inductively defines
\begin{equation}\label{eqn:iter0}
\begin{cases}
  \mtx{X}^{k}  = \text{shrink}(\mtx{Y}^{k-1}, \tau),\cr
  \mtx{Y}^{k}  = \mtx{Y}^{k-1} + \delta_{k}
  \mathcal{P}_{\Omega}(\mtx{M}-\mtx{X}^k)
\end{cases}
\end{equation}
until a stopping criterion is reached. In \eqref{eqn:iter0},
$\text{shrink}(\mtx{Y},\tau)$ is a nonlinear function which applies a
soft-thresholding rule at level $\tau$ to the singular values of the
input matrix, see Section \ref{sec:alg} for details.  The key property
here is that for large values of $\tau$, the sequence $\{\mtx{X}^k\}$
converges to a solution which very nearly minimizes
\eqref{eqn:nuc_norm2}.  Hence, at each step, one only needs to compute
at most one singular value decomposition and perform a few elementary
matrix additions. Two important remarks are in order:
\begin{enumerate}
\item {\em Sparsity.} For each $k \ge 0$, $\mtx{Y}^k$ vanishes outside
  of $\Omega$ and is, therefore, sparse, a fact which can be used to
  evaluate the shrink function rapidly.

\item {\em Low-rank property.} The matrices $\mtx{X}^k$ turn out to
  have low rank, and hence the algorithm has minimum storage
  requirement since we only need to keep principal factors in memory.
\end{enumerate}

Our numerical experiments demonstrate that the proposed algorithm can
solve problems, in Matlab, involving matrices of size $30,000 \times
30,000$ having close to a billion unknowns in 17 minutes on a standard
desktop computer with a 1.86 GHz CPU (dual core with Matlab's
multithreading option enabled) and 3 GB of memory.
As a consequence, the singular value thresholding algorithm may
become a rather powerful computational tool for large scale matrix
completion.

\subsection{General formulation}

The singular value thresholding algorithm can be adapted to deal with
other types of convex constraints. For instance, it may address
problems of the form
\begin{equation}
  \label{eqn:nuc_norm}
  \begin{array}{ll}
    \textrm{minimize}   & \quad \|\mtx{X}\|_*\\
    \textrm{subject to} & \quad f_i(\mtx{X}) \le 0, \quad i = 1, \ldots, m,
 \end{array}
\end{equation}
where each $f_i$ is a Lipschitz convex function (note that one can
handle linear equality constraints by considering pairs of affine
functionals). In the simpler case where the $f_i$'s are affine
functionals, the general algorithm goes through a sequence of
iterations which greatly resemble \eqref{eqn:iter0}. This is useful
because this enables the development of numerical algorithms which are
effective for recovering matrices from a small subset of sampled
entries possibly contaminated with noise.

\subsection{Contents and notations}

The rest of the paper is organized as follows. In Section
\ref{sec:alg}, we derive the singular value thresholding (SVT)
algorithm for the matrix completion problem, and recasts it in terms
of a well-known Lagrange multiplier algorithm. In Section
\ref{sec:general}, we extend the SVT algorithm and formulate a general
iteration which is applicable to general convex constraints. In
Section \ref{sec:conv}, we establish the convergence results for the
iterations given in Sections \ref{sec:alg} and \ref{sec:general}.
We demonstrate the performance and effectiveness of the algorithm
through numerical examples in Section \ref{sec:num}, and review
additional implementation details. Finally, we conclude the paper with
a short discussion in Section \ref{sec:discussion}.

Before continuing, we provide here a brief summary of the notations
used throughout the paper. Matrices are bold capital, vectors are bold
lowercase and scalars or entries are not bold. For instance, $\mtx{X}$
is a matrix and $X_{ij}$ its $(i,j)$th entry. Likewise, $\vct{x}$ is a
vector and $x_i$ its $i$th component. The nuclear norm of a matrix is
denoted by $\|\mtx{X}\|_*$, the Frobenius norm by $\|\mymathbf{X}\|_F$
and the spectral norm by $\|\mymathbf{X}\|_2$; note that these are
respectively the 1-norm, the 2-norm and the sup-norm of the vector of
singular values. The adjoint of a matrix $\mtx{X}$ is $\mtx{X}^*$ and
similarly for vectors. The notation $\diag(\mymathbf{x})$, where
$\vct{x}$ is a vector, stands for the diagonal matrix with $\{x_i\}$
as diagonal elements. We denote by $\langle\mtx{X}, \mtx{Y}\rangle =
\trace(\mtx{X}^*\mtx{Y})$ the standard inner product between two
matrices ($\|\mtx{X}\|_F^2 = \langle\mtx{X},\mtx{X}\rangle$).  The
Cauchy-Schwarz inequality gives
$\langle\mymathbf{X},\mymathbf{Y}\rangle\leq\|\mymathbf{X}\|_F\|\mymathbf{Y}\|_F$
and it is well known that we also have
$\langle\mymathbf{X},\mymathbf{Y}\rangle\leq\|\mymathbf{X}\|_*\|\mymathbf{Y}\|_2$
(the spectral and nuclear norms are dual from one another), see
e.g.~\cite{CR:XXX:08,Recht07}.

\section{The Singular Value Thresholding Algorithm }
\label{sec:alg}

This section introduces the singular value thresholding algorithm
and discusses some of its basic properties. We begin with the
definition of a key building block, namely, the singular value
thresholding operator.

\subsection{The singular value shrinkage operator}

Consider the singular value decomposition (SVD) of a matrix $\mtx{X}
\in \R^{n_1 \times n_2}$ of rank $r$
\begin{equation}
  \label{eq:svd}
  \mtx{X} = \mtx{U}  \mtx{\Sigma}   \mtx{V}^*,
\quad   \mtx{\Sigma}  = \diag(\{\sigma_i\}_{1 \le i \le r}),
\end{equation}
where $\mtx{U}$ and $\mtx{V}$ are respectively $n_1 \times r$ and $n_2
\times r$ matrices with orthonormal columns, and the singular values
$\sigma_i$ are positive (unless specified otherwise, we will always
assume that the SVD of a matrix is given in the reduced form
above). For each $\tau \ge 0$, we introduce the soft-thresholding
operator $\mathcal{D}_\tau$ defined as follows:
\begin{equation}
\label{eqn:DlamM2} \mathcal{D}_{\tau}(\mtx{X}):=  \mtx{U}
\mathcal{D}_{\tau}(\mtx{\Sigma}) \mtx{V}^*, \quad
\mathcal{D}_{\tau}(\mtx{\Sigma}) = \diag(\{\sigma_i - \tau)_+\}),
\end{equation}
where $t_+$ is the positive part of $t$, namely, $t_+ = \max(0,t)$. In
words, this operator simply applies a soft-thresholding rule to the
singular values of $\mtx{X}$, effectively shrinking these towards
zero. This is the reason why we will also refer to this transformation
as the {\em singular value shrinkage} operator. Even though the SVD
may not be unique, it is easy to see that the singular value shrinkage
operator is well defined and we do not elaborate further on this
issue.  In some sense, this shrinkage operator is a straightforward
extension of the soft-thresholding rule for scalars and vectors. In
particular, note that if many of the singular values of $\mtx{X}$ are
below the threshold $\tau$, the rank of $\mathcal{D}_{\tau}(\mtx{X})$
may be considerably lower than that of $\mtx{X}$, just like the
soft-thresholding rule applied to vectors leads to sparser outputs
whenever some entries of the input are below threshold.

The singular value thresholding operator is the proximity operator
associated with the nuclear norm. Details about the proximity
operator can be found in e.g.~\cite{HL:BOOK:93}.
\begin{theorem}\label{thm:prox}
  For each $\tau \ge 0$ and $\mtx{Y} \in \R^{n_1 \times n_2}$, the
  singular value shrinkage operator $\eqref{eqn:DlamM2}$ obeys
\begin{equation}
\label{eqn:DlamM}
\mathcal{D}_{\tau}(\mtx{Y}) = \arg\min_{\mtx{X}} \left\{
\frac12\|\mtx{X}-\mtx{Y}\|_F^2 + \tau\|\mymathbf{X}\|_{*}
\right\}.
\end{equation}
\end{theorem}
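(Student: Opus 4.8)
The plan is to exploit convexity and reduce everything to a first-order optimality condition. The objective
$h(\mtx{X}) = \tfrac12\|\mtx{X}-\mtx{Y}\|_F^2 + \tau\|\mtx{X}\|_*$ is strictly convex, since the quadratic term is strictly convex and the nuclear norm is convex; hence $h$ has a unique minimizer $\hat{\mtx{X}}$, and it suffices to show that $\hat{\mtx{X}} = \mathcal{D}_{\tau}(\mtx{Y})$. I would characterize the minimizer through the subgradient condition of convex analysis: $\hat{\mtx{X}}$ minimizes $h$ if and only if $0 \in \partial h(\hat{\mtx{X}}) = \hat{\mtx{X}} - \mtx{Y} + \tau\,\partial\|\hat{\mtx{X}}\|_*$, that is, $\mtx{Y} - \hat{\mtx{X}} \in \tau\,\partial\|\hat{\mtx{X}}\|_*$. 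The proof then amounts to producing a candidate and verifying this inclusion.

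The second ingredient is the subdifferential of the nuclear norm. For a matrix $\mtx{X}$ with reduced SVD $\mtx{X} = \mtx{U}\mtx{\Sigma}\mtx{V}^*$, one has $\partial\|\mtx{X}\|_* = \{\mtx{U}\mtx{V}^* + \mtx{W} : \mtx{U}^*\mtx{W} = 0,\ \mtx{W}\mtx{V} = 0,\ \|\mtx{W}\|_2 \le 1\}$; in words, an admissible subgradient is $\mtx{U}\mtx{V}^*$ plus an arbitrary contraction $\mtx{W}$ whose action is confined to the orthogonal complements of the column and row spaces of $\mtx{X}$. I would then split the SVD of $\mtx{Y}$ according to whether a singular value exceeds $\tau$, writing $\mtx{Y} = \mtx{U}_0\mtx{\Sigma}_0\mtx{V}_0^* + \mtx{U}_1\mtx{\Sigma}_1\mtx{V}_1^*$, where $\mtx{\Sigma}_0$ collects the singular values strictly larger than $\tau$ and $\mtx{\Sigma}_1$ the remaining ones. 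By definition the candidate is $\hat{\mtx{X}} = \mathcal{D}_{\tau}(\mtx{Y}) = \mtx{U}_0(\mtx{\Sigma}_0 - \tau\Id)\mtx{V}_0^*$, and a direct subtraction gives $\mtx{Y} - \hat{\mtx{X}} = \tau\,\mtx{U}_0\mtx{V}_0^* + \mtx{U}_1\mtx{\Sigma}_1\mtx{V}_1^* = \tau(\mtx{U}_0\mtx{V}_0^* + \mtx{W})$ with $\mtx{W} = \tau^{-1}\mtx{U}_1\mtx{\Sigma}_1\mtx{V}_1^*$.

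It remains to confirm that $\mtx{U}_0\mtx{V}_0^* + \mtx{W}$ belongs to $\partial\|\hat{\mtx{X}}\|_*$. Since $\hat{\mtx{X}}$ has left and right singular vectors $\mtx{U}_0$ and $\mtx{V}_0$, the term $\mtx{U}_0\mtx{V}_0^*$ is exactly the required ``sign'' part. Because the columns of $\mtx{U}_0$ and $\mtx{U}_1$ (respectively $\mtx{V}_0$ and $\mtx{V}_1$) are mutually orthogonal, one checks $\mtx{U}_0^*\mtx{W} = 0$ and $\mtx{W}\mtx{V}_0 = 0$; and since every diagonal entry of $\mtx{\Sigma}_1$ is at most $\tau$, one has $\|\mtx{W}\|_2 = \tau^{-1}\|\mtx{\Sigma}_1\|_2 \le 1$. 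This places $\mtx{Y} - \hat{\mtx{X}}$ in $\tau\,\partial\|\hat{\mtx{X}}\|_*$ and closes the argument. If one prefers not to quote the subdifferential formula, the same three facts feed directly into the defining inequality: using the norm duality $\langle\mtx{A},\mtx{X}\rangle \le \|\mtx{A}\|_2\|\mtx{X}\|_*$ together with $\|\mtx{U}_0\mtx{V}_0^* + \mtx{W}\|_2 \le 1$, and the identity $\langle \mtx{Y}-\hat{\mtx{X}},\hat{\mtx{X}}\rangle = \tau\|\hat{\mtx{X}}\|_*$, one verifies $\tau\|\mtx{X}\|_* \ge \tau\|\hat{\mtx{X}}\|_* + \langle\mtx{Y}-\hat{\mtx{X}},\,\mtx{X}-\hat{\mtx{X}}\rangle$ for every $\mtx{X}$, which is precisely optimality.

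The main obstacle is the subdifferential characterization of the nuclear norm — equivalently, the construction of the dual certificate $\mtx{U}_0\mtx{V}_0^* + \mtx{W}$ and the verification that it is a contraction; everything else is bookkeeping with orthogonal singular subspaces. The one point deserving care is that $\mtx{U}_0\mtx{V}_0^*$ and $\mtx{W}$ act on mutually orthogonal row and column spaces, so that the singular values of their sum are the union of the two sets of singular values, whence $\|\mtx{U}_0\mtx{V}_0^* + \mtx{W}\|_2 = \max(1,\|\mtx{W}\|_2) \le 1$ rather than a value as large as two.
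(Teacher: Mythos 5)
Your argument is correct and follows essentially the same route as the paper's own proof: strict convexity for uniqueness, the subgradient optimality condition $\mtx{Y}-\hat{\mtx{X}}\in\tau\,\partial\|\hat{\mtx{X}}\|_*$, the standard subdifferential formula for the nuclear norm, and the split of the SVD of $\mtx{Y}$ at the threshold $\tau$ to exhibit the certificate $\mtx{U}_0\mtx{V}_0^*+\mtx{W}$. Your closing remark that $\|\mtx{U}_0\mtx{V}_0^*+\mtx{W}\|_2=\max(1,\|\mtx{W}\|_2)$ because the two terms act on orthogonal subspaces is a nice additional touch the paper leaves implicit.
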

\begin{proof} Since the function $h_0(\mtx{X}) := \tau \|\mtx{X}\|_* +
  \frac{1}{2} \|\mtx{X}-\mtx{Y}\|_F^2$ is strictly convex, it is easy
  to see that there exists a unique minimizer, and we thus need to
  prove that it is equal to $\mathcal{D}_{\tau}(\mtx{Y})$.  To do
  this, recall the definition of a subgradient of a convex function $f
  : \R^{n_1 \times n_2} \goto \R$. We say that $\mtx{Z}$ is a
  subgradient of $f$ at $\mtx{X}_0$, denoted $\mtx{Z} \in \partial
  f(\mtx{X}_0)$, if
\begin{equation}
  \label{eq:subgradient}
  f(\mtx{X}) \ge f(\mtx{X}_0) + \<\mtx{Z}, \mtx{X} - \mtx{X}_0\>
\end{equation}
for all $\mtx{X}$.  Now $\hat{\mtx{X}}$ minimizes $h_0$ if and only if
$\mymathbf{0}$ is a subgradient of the functional $h_0$ at the point
$\hat{\mtx{X}}$, i.e.
\begin{equation}\label{eqn:subdiff}
\mymathbf{0} \in \hat{\mtx{X}}-\mtx{Y}+\tau\partial\|\hat{\mtx{X}}\|_*,
\end{equation}
where $\partial\|\hat{\mtx{X}}\|_*$ is the set of subgradients of the
nuclear norm. Let $\mtx{X} \in \R^{n_1 \times n_2}$ be an arbitrary
matrix and $\mtx{U} \mtx{\Sigma} \mtx{V}^*$ be its SVD.  It is known
\cite{CR:XXX:08,Lew:MP:03,Wat:LAA:92} that
\begin{equation}\label{eqn:subdiffNorm}
  \partial\|\mtx{X}\|_*=\left\{\mtx{U} \mtx{V}^* + \mtx{W} :
    ~\mtx{W}\in\mathbb{R}^{n_1 \times n_2},~~
    \mtx{U}^*\mtx{W}=0,~~
    \mtx{W} \mtx{V} =0,~~
    \|\mtx{W}\|_2\leq1\right\}.
\end{equation}

Set $\hat{\mtx{X}} := {\cal D}_\tau(\mtx{Y})$ for short. In order to
show that $\hat{\mtx{X}}$ obeys \eqref{eqn:subdiff}, decompose the SVD
of $\mtx{Y}$ as
\[
\mtx{Y} = \mtx{U}_0 \mtx{\Sigma}_0 \mtx{V}_0^* + \mtx{U}_1 \mtx{\Sigma}_1 \mtx{V}_1^*,
\]
where $\mtx{U}_0$, $\mtx{V}_0$ (resp.~$\mtx{U}_1$, $\mtx{V}_1$) are
the singular vectors associated with singular values greater than
$\tau$ (resp.~smaller than or equal to $\tau$). With these notations, we
have
\[
\hat{\mtx{X}} = \mtx{U}_0 (\mtx{\Sigma}_0 - \tau \mtx{I})\mtx{V}_0^*
\]
and, therefore,
\[
\mtx{Y} - \hat{\mtx{X}} = \tau (\mtx{U}_0 \mtx{V}_0^* +
\mtx{W}), \quad \mtx{W} = \tau^{-1} \mtx{U}_1 \mtx{\Sigma}_1
\mtx{V}_1^*.
\]
By definition, $\mtx{U}_0^* \mtx{W} = 0$, $\mtx{W} \mtx{V}_0 = 0$ and
since the diagonal elements of $\mtx{\Sigma}_1$ have magnitudes
bounded by $\tau$, we also have $\|\mtx{W}\|_2 \le 1$. Hence $\mtx{Y} -
\hat{\mtx{X}} \in \tau \partial\|\hat{\mtx{X}}\|_*$, which concludes the
proof.
\end{proof}

\subsection{Shrinkage iterations}

We are now in the position to introduce the singular value
thresholding algorithm. Fix $\tau > 0$ and a sequence $\{\delta_k\}$
of positive step sizes. Starting with $\mtx{Y}_0$, inductively define
for $k = 1, 2, \ldots$,
\begin{equation}\label{eqn:iter}
\begin{cases}
  \mtx{X}^{k}  = {\cal D}_\tau(\mtx{Y}^{k-1}),\cr
  \mtx{Y}^{k}  = \mtx{Y}^{k-1} + \delta_{k}
  \mathcal{P}_{\Omega}(\mtx{M}-\mtx{X}^k)
\end{cases}
\end{equation}
until a stopping criterion is reached (we postpone the discussion
this stopping criterion and of the choice of step sizes). This
shrinkage iteration is very simple to implement. At each step, we only
need to compute an SVD and perform elementary matrix operations. With
the help of a standard numerical linear algebra package, the whole
algorithm can be coded in just a few lines.

Before addressing further computational issues, we would like to make
explicit the relationship between this iteration and the original
problem \eqref{eqn:min}.  In Section \ref{sec:conv}, we will show that
the sequence $\{\mtx{X}^k\}$ converges to the unique solution of an
optimization problem closely related to \eqref{eqn:min}, namely,
\begin{equation}\label{eqn:minnuc+fro}
  \begin{array}{ll}
    \textrm{minimize}   & \quad \tau \|\mtx{X}\|_* +
\frac{1}{2} \|\mtx{X}\|_F^2\\
    \textrm{subject to} & \quad \mathcal{P}_\Omega(\mtx{X}) = \mathcal{P}_\Omega(\mtx{M}).
 \end{array}
\end{equation}
Furthermore, it is intuitive that the solution to this modified
problem converges to that of \eqref{eqn:nuc_norm2} as $\tau \to
\infty$ as shown in Section \ref{sec:general}. Thus by selecting a
large value of the parameter $\tau$, the sequence of iterates
converges to a matrix which nearly minimizes \eqref{eqn:min}.

As mentioned earlier, there are two crucial properties which make this
algorithm ideally suited for matrix completion.
\begin{itemize}
\item {\em Low-rank property.} A remarkable empirical fact is that the
  matrices in the sequence $\{\mtx{X}^k\}$ have low rank (provided, of
  course, that the solution to \eqref{eqn:minnuc+fro} has low
  rank). We use the word ``empirical'' because all of our numerical
  experiments have produced low-rank sequences but we cannot
  rigorously prove that this is true in general.  The reason for this
  phenomenon is, however, simple: because we are interested in large
  values of $\tau$ (as to better approximate the solution to
  \eqref{eqn:min}), the thresholding step happens to `kill' most of
  the small singular values and produces a low-rank output.
  In fact, our numerical results show that the rank of $\mtx{X}^{k}$
  is nondecreasing with $k$, and the maximum rank is reached in the
  last steps of the algorithm, see Section \ref{sec:num}.

  Thus, when the rank of the solution is substantially smaller than
  either dimension of the matrix, the storage requirement is low since
  we could store each $\mtx{X^k}$ in its SVD form (note that we only
  need to keep the current iterate and may discard earlier
  values).

\item {\em Sparsity.}  Another important property of the SVT algorithm
  is that the iteration matrix $\mtx{Y}^k$ is sparse. Since
  $\mtx{Y}^0=\mtx{0}$, we have by induction that $\mtx{Y}^{k}$
  vanishes outside of $\Omega$. The fewer entries available, the
  sparser $\mtx{Y}^k$. Because the sparsity pattern $\Omega$ is fixed
  throughout, one can then apply sparse matrix techniques to save
  storage. Also, if $|\Omega| = m$, the computational cost of updating
  $\mtx{Y}^k$ is of order $m$. Moreover, we can call
  subroutines supporting sparse matrix computations, which can further
  reduce computational costs.

  One such subroutine is the SVD. However, note that we do not need to
  compute the entire SVD of $\mtx{Y}^k$ to apply the singular value
  thresholding operator. Only the part corresponding to singular
  values greater than $\tau$ is needed. Hence, a good strategy is to
  apply the iterative Lanczos algorithm to compute the first few
  singular values and singular vectors. Because $\mtx{Y}^k$ is sparse,
  $\mtx{Y}^k$ can be applied to arbitrary vectors rapidly, and this
  procedure offers a considerable speedup over naive methods.
\end{itemize}

\subsection{Relation with other works}

Our algorithm is inspired by recent work in the area of $\ell_1$
minimization, and especially by the work on linearized Bregman
iterations for compressed sensing, see
\cite{COS:XXX:08:3,COS:XXX:08:2,COS:XXX:08,DO:XXX:07,YOGD:SIIMS:08,ODY:XXX:08}
for linearized Bregman iterations and
\cite{CR:IP:07,CRT:TIT:06,CT:TIT:05,CT:TIT:06,Don:TIT:06} for some
information about the field of compressed sensing. In this line of
work, linearized Bregman iterations are used to find the solution to
an underdetermined system of linear equations with minimum $\ell_1$
norm.  In fact, Theorem \ref{thm:prox} asserts that the singular value
thresholding algorithm can be formulated as a linearized Bregman
iteration.  Bregman iterations were first introduced in
\cite{OBGXY:MMS:05} as a convenient tool for solving computational
problems in the imaging sciences, and a later paper
\cite{YOGD:SIIMS:08} showed that they were useful for solving
$\ell_1$-norm minimization problems in the area of compressed sensing.
Linearized Bregman iterations were proposed in \cite{DO:XXX:07} to
improve performance of plain Bregman iterations, see also
\cite{YOGD:SIIMS:08}. Additional details together with a technique for
improving the speed of convergence called {\em kicking} are described
in \cite{ODY:XXX:08}. On the practical side, the paper
\cite{COS:XXX:08:2} applied Bregman iterations to solve a deblurring
problem while on the theoretical side, the references
\cite{COS:XXX:08:3,COS:XXX:08} gave a rigorous analysis of the
convergence of such iterations. New developments keep on coming out at
a rapid pace and recently, \cite{GO:XXX:08} introduced a new
iteration, the {\em split Bregman iteration}, to extend Bregman-type
iterations (such as linearized Bregman iterations) to problems
involving the minimization of $\ell_1$-like functionals such as
total-variation norms, Besov norms, and so forth.

When applied to $\ell_1$-minimization problems, linearized Bregman
iterations are sequences of soft-thresholding rules operating on
vectors. Iterative soft-thresholding algorithms in connection with
$\ell_1$ or total-variation minimization have quite a bit of history
in signal and image processing and we would like to mention the
works \cite{TVSynthesis,Lintner} for total-variation minimization,
\cite{Nowak_EM,DDD:CPAM:04,DTV:IPI:07} for $\ell_1$ minimization,
and
\cite{CCS:ACHA:08,CCSS:SISC:08,CS:NM:07,CCSS:SISC:03,ESQD:ACHA:05,FSM:CJ:07,SDC:AA:03,BBAC:ECCV:04}
for some recent applications in the area of image inpainting and
image restoration. Just as iterative soft-thresholding methods are
designed to find sparse solutions, our iterative singular value
thresholding scheme is designed to find a sparse vector of singular
values. In classical problems arising in the areas of compressed
sensing, and signal or image processing, the sparsity is expressed
in a known transformed domain and soft-thresholding is applied to
transformed coefficients. In contrast, the shrinkage operator
$\mathcal{D}_{\tau}$ is adaptive. The SVT not only discovers a
sparse singular vector but also the bases in which we have a sparse
representation.  In this sense, the SVT algorithm is an extension of
earlier iterative soft-thresholding schemes.

Finally, we would like to contrast the SVT iteration \eqref{eqn:iter}
with the popular iterative soft-thresholding algorithm used in many
papers in imaging processing and perhaps best known under the name of
Proximal Forward-Backward Splitting method (PFBS), see
\cite{CW:MMS:05,Nowak_EM,DDD:CPAM:04,YinFPC,CCS:ACHA:08} for example.
The constrained minimization problem \eqref{eqn:nuc_norm2} may be
relaxed into
\begin{equation}
\label{eqn:min_uncon}
 \textrm{minimize} \quad \lambda \|\mtx{X}\|_* +
 \frac{1}{2} \|\cP_{\Omega}(\mtx{X})-\cP_{\Omega}(\mtx{M})\|_F^2
\end{equation}
for some $\lambda > 0$. Theorem \ref{thm:prox} asserts that
$\cD_{\lambda}$ is the proximity operator of $\lambda \|\mtx{X}\|_*$
and Proposition 3.1(iii) in \cite{CW:MMS:05} gives that the solution
to this unconstrained problem is characterized by the fixed point
equation $\mtx{X} = \cD_{\lambda \delta}(\mtx{X} + \delta
P_{\Omega}(\mtx{M}-\mtx{X}))$ for each $\delta > 0$. One can then
apply a simplified version of the PFBS method (see (3.6) in
\cite{CW:MMS:05}) to obtain iterations of the form
\[
 \mtx{X}^{k} =
\cD_{\lambda \delta_{k-1}}(\mtx{X}^{k-1}+\delta_{k-1} P_{\Omega}(\mtx{M}-\mtx{X}^{k-1})).
\]
Introducing an intermediate matrix $\mtx{Y}^{k}$, this algorithm may
be expressed as
\begin{equation}
\label{eqn:iter_thresh}
\begin{cases}
 \mtx{X}^k = \cD_{\lambda \delta_{k-1}}(\mtx{Y}^{k-1}),\cr
 \mtx{Y}^{k} = \mtx{X}^{k}+\delta_kP_{\Omega}(\mtx{M}-\mtx{X}^{k}).
\end{cases}
\end{equation}
The difference with \eqref{eqn:iter} may seem subtle at
first---replacing $\mtx{X}^{k}$ in \eqref{eqn:iter_thresh} with
$\mtx{Y}^{k-1}$ and setting $\delta_k = \delta$ gives \eqref{eqn:iter}
with $\tau = \lambda \delta$---but has enormous consequences as this
gives entirely different algorithms.  First, they have different
limits: while \eqref{eqn:iter} converges to the solution of the
constrained minimization \eqref{eqn:minnuc+fro},
\eqref{eqn:iter_thresh} converges to the solution of
\eqref{eqn:min_uncon} provided that the sequence of step sizes is
appropriately selected.  Second, selecting a large $\lambda$ (or a
large value of $\tau = \lambda \delta$) in \eqref{eqn:iter_thresh}
gives a low-rank sequence of iterates and a limit with small nuclear
norm. The limit, however, does not fit the data and this is why one
has to choose a small or moderate value of $\lambda$ (or of $\tau =
\lambda \delta$).  However, when $\lambda$ is not sufficiently large,
the $\mtx{X}^k$ may not have low rank even though the solution has low
rank (and one may need to compute many singular vectors), and
$\mtx{Y}^k$ is not sufficiently sparse to make the algorithm
computationally attractive. Moreover, the limit does not necessary
have a small nuclear norm.  These are reasons why
\eqref{eqn:iter_thresh} is not suitable for matrix completion.



\subsection{Interpretation as a Lagrange multiplier method}
\label{sec:uzawa}

In this section, we recast the SVT algorithm as a type of Lagrange
multiplier algorithm known as Uzawa's algorithm. An important
consequence is that this will allow us to extend the SVT algorithm to
other problems involving the minimization of the nuclear norm under
convex constraints, see Section \ref{sec:general}. Further, another
contribution of this paper is that this framework actually recasts
linear Bregman iterations as a very special form of Uzawa's algorithm,
hence providing fresh and clear insights about
these 
iterations.

In what follows, we set $f_\tau(\mtx{X}) = \tau \|\mtx{X}\|_* +
\frac{1}{2} \|\mtx{X}\|_F^2$ for some fixed $\tau > 0$ and recall that
we wish to solve \eqref{eqn:minnuc+fro}
\[
  \begin{array}{ll}
    \textrm{minimize}   & \quad f_\tau(\mtx{X})\\
    \textrm{subject to} & \quad \mathcal{P}_\Omega(\mtx{X}) = \mathcal{P}_\Omega(\mtx{M}).
 \end{array}
\]
The Lagrangian for this problem is given by
\[
{\cal L}(\mtx{X},\mtx{Y}) = f_\tau(\mtx{X}) + \<\mtx{Y}, {\cal
  P}_\Omega(\mtx{M} - \mtx{X})\>,
\]
where $\mtx{Y} \in \R^{n_1 \times n_2}$.  Strong duality holds and
$\mtx{X}^\star$ and $\mtx{Y}^\star$ are primal-dual optimal if
$(\mtx{X}^\star, \mtx{Y}^\star)$ is a saddlepoint of the Lagrangian
${\cal L}(\mtx{X},\mtx{Y})$, i.e.~a pair obeying
\begin{equation}
\label{eq:saddlepoint}
\sup_{\mtx{Y}} \inf_{\mtx{X}} {\cal L}(\mtx{X}, \mtx{Y}) = {\cal L}(\mtx{X}^\star, \mtx{Y}^\star) = \inf_{\mtx{X}} \sup_{\mtx{Y}}
{\cal L}(\mtx{X}, \mtx{Y}).
\end{equation}
(The function $g_0(\mtx{Y}) = \inf_{\mtx{X}} {\cal L}(\mtx{X},
\mtx{Y})$ is called the dual function.) Uzawa's algorithm approaches the
problem of finding a saddlepoint with an iterative procedure.  From
$\mtx{Y}_0 = \mtx{0}$, say, inductively define
\begin{equation}
\label{eq:Lag1}
\begin{cases}
{\cal L}(\mtx{X}^{k}, \mtx{Y}^{k-1})
 = \min_{\mtx{X}} {\cal L}(\mtx{X},\mtx{Y}^{k-1})\\  \mtx{Y}^k
 = \mtx{Y}^{k-1} + \delta_k \mathcal{P}_\Omega(\mtx{M} -\mtx{X}^k),
\end{cases}
\end{equation}
where $\{\delta_k\}_{k \ge 1}$ is a sequence of positive step
sizes. Uzawa's algorithm is, in fact, a subgradient method applied to
the dual problem, where each step moves the current iterate in the
direction of the gradient or of a subgradient. Indeed, observe that
\begin{equation}\label{eqn:partialg0}
\partial_{\mtx{Y}} g_0(\mtx{Y}) = \partial_{\mtx{Y}} {\cal
  L}(\tilde{\mtx{X}}, \mtx{Y}) =
\mathcal{P}_\Omega(\mtx{M}-\tilde{\mtx{X}}),
\end{equation}
where $\tilde{\mtx{X}}$ is the minimizer of the Lagrangian for that
value of $\mtx{Y}$ so that a gradient descent update for $\mtx{Y}$ is
of the form
\[
\mtx{Y}^k = \mtx{Y}^{k-1} + \delta_k \partial_{\mtx{Y}}
g_0(\mtx{Y}^{k-1}) = \mtx{Y}^{k-1} + \delta_k
\mathcal{P}_\Omega(\mtx{M}-\mtx{X}^k).
\]

It remains to compute the minimizer of the Lagrangian \eqref{eq:Lag1},
and note that
\begin{equation}\label{eqn:argminequiv}
\arg \min \, f_\tau(\mtx{X}) + \<\mtx{Y}, {\cal P}_\Omega(\mtx{M} -
\mtx{X})\> = \arg \min \, \tau \|\mtx{X}\|_* + \frac{1}{2} \|\mtx{X} -
\mathcal{P}_\Omega \mtx{Y}\|^2_F.
\end{equation}
However, we know that the minimizer is given by
$\mathcal{D}_\tau(\mathcal{P}_\Omega(\mtx{Y}))$ and since $\mtx{Y}^{k}
= \mathcal{P}_\Omega(\mtx{Y}^k)$ for all $k \ge 0$, Uzawa's algorithm
takes the form
\begin{align*}
\begin{cases}
  \mtx{X}^k   = {\cal D}_{\tau} (\mtx{Y}^{k-1})\cr
  \mtx{Y}^k  = \mtx{Y}^{k-1} + \delta_k \mathcal{P}_\Omega(\mtx{M} - \mtx{X}^k),
\end{cases}
\end{align*}
which is exactly the update \eqref{eqn:iter}. This point of view
brings to bear many different mathematical tools for proving the
convergence of the singular value thresholding iterations. For an early
use of Uzawa's algorithm minimizing an $\ell_1$-like functional, the
total-variation norm, under linear inequality constraints, see
\cite{TVSynthesis}.

\section{General Formulation}
\label{sec:general}

This section presents a general formulation of the SVT algorithm for
approximately minimizing the nuclear norm of a matrix under convex
constraints.

\subsection{Linear equality constraints}

Set the objective functional $f_\tau(\mtx{X}) = \tau \|\mtx{X}\|_* +
\frac{1}{2}\|\mtx{X}\|_F^2$ for some fixed $\tau > 0$, and consider
the following optimization problem:
\begin{equation}
\label{eq:linear}
  \begin{array}{ll}
    \textrm{minimize}   & \quad f_\tau(\mtx{X})\\
    \textrm{subject to} & \quad \mathcal{A}(\mtx{X})  = \vct{b},
 \end{array}
\end{equation}
where $\mathcal{A}$ is a linear transformation mapping $n_1 \times
n_2$ matrices into $\R^m$ ($\mathcal{A}^*$ is the adjoint of
$\mathcal{A}$). This more general formulation is considered in
\cite{CR:XXX:08} and \cite{Recht07} as an extension of the matrix
completion problem.  Then the Lagrangian for this problem is of the
form
\begin{equation}
  \label{eq:Lagrangian1}
  {\cal L}(\mtx{X}, \vct{y}) = f_\tau(\mtx{X}) + \< \vct{y}, \vct{b} -
  \mathcal{A}(\mtx{X})\>,
\end{equation}
where $\mtx{X} \in \R^{n_1 \times n_2}$ and $\vct{y} \in \R^m$, and
starting with $\vct{y}^0 = \vct{0}$, Uzawa's iteration is given by
\begin{equation}
\label{eqn:itergeneral}
\begin{cases}
  \mymathbf{X}^{k} =
  \mathcal{D}_{\tau}(\mathcal{A}^*(\vct{y}^{k-1})),\cr \vct{y}^{k} =
  \vct{y}^{k-1} + \delta_k (\vct{b} - \mathcal{A}(\mtx{X}^k)).
\end{cases}
\end{equation}
The iteration \eqref{eqn:itergeneral} is of course the same as
\eqref{eqn:iter} in the case where $\mathcal{A}$ is a sampling
operator extracting $m$ entries with indices in $\Omega$ out of an
$n_1 \times n_2$ matrix. To verify this claim, observe that in this
situation, $\mathcal{A}^* \mathcal{A} = \mathcal{P}_\Omega$, and let
$\mtx{M}$ be any matrix obeying $\mathcal{A}(\mtx{M}) = \vct{b}$. Then
defining $\mtx{Y}^k = \mathcal{A}^*(\vct{y}^{k})$ and substituting
this expression in \eqref{eqn:itergeneral} gives \eqref{eqn:iter}.

\subsection{General convex constraints}

One can also adapt the algorithm to handle general convex
constraints. Suppose we wish to minimize $f_\tau(\mtx{X})$ defined as
before over a convex set $\mtx{X} \in \mathcal{C}$. To simplify, we
will assume that this convex set is given by
\[
\mathcal{C} = \{\mtx{X} : f_i(\mtx{X}) \le 0, \, \forall i = 1,
\ldots, m\},
\]
where the $f_i$'s are convex functionals (note that one can handle
linear equality constraints by considering pairs of affine
functionals).  The problem of interest is then of the form
\begin{equation}
\label{eq:convex}
  \begin{array}{ll}
    \textrm{minimize}   & \quad f_\tau(\mtx{X})\\
    \textrm{subject to} & \quad f_i(\mtx{X}) \le 0, \quad i = 1, \ldots, m.
 \end{array}
\end{equation}
Just as before, it is intuitive that as $\tau \to \infty$, the
solution to this problem converges to a minimizer of the nuclear norm
under the same constraints \eqref{eqn:nuc_norm} as shown in Theorem
\ref{thm:largemu2} at the end of this section.

Put $\mathcal{F}(\mtx{X}) := (f_1(\mtx{X}), \ldots, f_m(\mtx{X}))$
for short.  Then the Lagrangian for \eqref{eq:convex} is equal to
\[
{\cal L}(\mtx{X}, \vct{y}) = f_\tau(\mtx{X}) + \< \vct{y},
\mathcal{F}(\mtx{X})\>,
\]
where $\mtx{X} \in \R^{n_1 \times n_2}$ and $\vct{y} \in \R^m$ is now
a vector with nonnegative components denoted, as usual, by $\vct{y}
\ge \vct{0}$. One can apply Uzawa's method just as before with the
only modification that we will use a subgradient method with
projection to maximize the dual function since we need to make sure
that the successive updates $\vct{y}^k$ belong to the nonnegative
orthant. This gives
\begin{equation}
\label{eqn:itergeneral2}
\begin{cases}
  \mymathbf{X}^{k} = \arg \min \, \{f_\tau(\mtx{X}) + \<\vct{y}^{k-1},  \mathcal{F}(\mtx{X})\>\}, \cr
  \vct{y}^{k} =
  [\vct{y}^{k-1} + \delta_k \mathcal{F}(\mtx{X}^k) ]_+.
\end{cases}
\end{equation}
Above, $\vct{x}_+$ is of course the vector with entries equal to
$\max(x_i,0)$.  When $\mathcal{F}$ is an affine mapping of the form
$\vct{b} - \cA(\mtx{X})$ so that one solves
\[
  \begin{array}{ll}
    \textrm{minimize}   & \quad f_\tau(\mtx{X})\\
    \textrm{subject to} & \quad \cA(\mtx{X}) \ge \vct{b},
 \end{array}
\]
 this simplifies to
\begin{equation}
\label{eqn:itergeneral3}
\begin{cases}
  \mymathbf{X}^{k} = \mathcal{D}_\tau(\mathcal{A}^*(\vct{y}^{k-1})), \cr
  \vct{y}^{k} =
  [\vct{y}^{k-1} + \delta_k (\vct{b} - \mathcal{A}(\mtx{X}^k))]_+,
\end{cases}
\end{equation}
and thus the extension to linear inequality constraints is
straightforward.

\subsection{Example}
\label{sec:Dantzig}

An interesting example concerns the extension of the Dantzig selector
\cite{DS} to matrix problems. Suppose we have available linear
measurements about a matrix $\mtx{M}$ of interest
\begin{equation}
  \label{eq:noisy}
  \vct{b} = \mathcal{A}(\mtx{M}) + \vct{z},
\end{equation}
where $\vct{z} \in \R^m$ is a noise vector. Then under these
circumstances, one might want to find the matrix which minimizes the
nuclear norm among all matrices which are consistent with the data
$\vct{b}$. Inspired by the work on the Dantzig selector which was
originally developed for estimating sparse parameter vectors from
noisy data, one could approach this problem by solving
\begin{equation}
\label{eq:DS}
  \begin{array}{ll}
    \textrm{minimize}   & \quad \|\mtx{X}\|_*\\
    \textrm{subject to} & \quad |\text{\bf vec}(\mathcal{A}^*(\vct{r}))| \le
    \text{\bf vec}(\mtx{E}), \quad \vct{r} := \vct{b} - \mathcal{A}(\mtx{X}),
 \end{array}
\end{equation}
where $\mtx{E}$ is an array of tolerances, which is adjusted to fit
the noise statistics \cite{DS}. Above, $\text{\bf vec}(\mtx{A}) \le
\text{\bf vec}(\mtx{B})$, for any two matrices $\mtx{A}$ and
$\mtx{B}$, means componentwise inequalities; that is, $A_{ij} \le
B_{ij}$ for all indices $i, j$. We use this notation as not to confuse
the reader with the positive semidefinite ordering.  In the case of
the matrix completion problem where $\mathcal{A}$ extracts sampled
entries indexed by $\Omega$, one can always see the data vector as the
sampled entries of some matrix $\mtx{B}$ obeying $\cP_\Omega(\mtx{B})
= \cA^*(\vct{b})$; the constraint is then natural for it may be expressed
as
\[
|B_{ij} - X_{ij}| \le E_{ij}, \quad (i,j) \in \Omega,
\]
If $\vct{z}$ is white noise with standard deviation $\sigma$, one may
want to use a multiple of $\sigma$ for $E_{ij}$.  In words, we are
looking for a matrix with minimum nuclear norm under the constraint
that all of its sampled entries do not deviate too much from what has
been observed.

Let $\mtx{Y}_+ \in \R^{n_1 \times n_2}$ (resp.~$\mtx{Y}_- \in \R^{n_1
  \times n_2}$) be the Lagrange multiplier associated with the
componentwise linear inequality constraints $\text{\bf vec}(\mathcal{A}^*(\vct{r})) \le
\text{\bf vec}(\mtx{E})$ (resp.~$-\text{\bf vec}(\mathcal{A}^*(\vct{r})) \le \text{\bf vec}(\mtx{E}$)).  Then starting
with $\mtx{Y}_{\pm}^0 = \mtx{0}$, the SVT iteration for this problem
is of the form
\begin{equation}
\label{eqn:iterDS}
\begin{cases}
  \mymathbf{X}^{k} =
  \mathcal{D}_\tau(\mathcal{A}^*\mathcal{A}(\mtx{Y}_+^{k-1}-\mtx{Y}_{-}^{k-1})),
  \cr \mtx{Y}_{\pm}^k = [\mtx{Y}_{\pm}^{k-1} + \delta_k(\pm
  \mathcal{A}^*(\vct{r}^k) - \mtx{E})]_+,\quad \vct{r}^k = \vct{b}^k -
  \mathcal{A}(\mtx{X}^k),
\end{cases}
\end{equation}
where again $[\cdot]_+$ is applied componentwise.

We conclude by noting that in the matrix completion problem where
$\mathcal{A}^*\mathcal{A} = \mathcal{P}_\Omega$ and one observes $
\mathcal{P}_\Omega (\mtx{B})$, one can check that this iteration
simplifies to
\begin{equation}
\label{eqn:iterDS2}
\begin{cases}
  \mymathbf{X}^{k} =
  \mathcal{D}_\tau(\mtx{Y}_+^{k-1}-\mtx{Y}_{-}^{k-1}), \cr
  \mtx{Y}_{\pm}^k = [\mtx{Y}_{\pm}^{k-1} + \delta_k
  \mathcal{P}_\Omega(\pm (\mtx{B}-\mtx{X}^k) - \mtx{E})]_+.
\end{cases}
\end{equation}
Again, this is easy to implement and whenever the solution has low
rank, the iterates $\mtx{X}^k$ have low rank as well.

\subsection{When the proximal problem gets close}

We now show that minimizing the proximal objective $f_\tau(\mtx{X}) =
\tau \|\mtx{X}\|_* + \frac{1}{2} \|\mtx{X}\|_F^2$ is the same as
minimizing the nuclear norm in the limit of large $\tau$'s. The
theorem below is general and covers the special case of linear
equality constraints as in \eqref{eqn:minnuc+fro}.

\begin{theorem}\label{thm:largemu2}
  Let $\mtx{X}_{\tau}^{\star}$ be the solution to \eqref{eq:convex}
  and $\mtx{X}_\infty$ be the minimum Frobenius-norm solution to
  \eqref{eqn:nuc_norm} defined as
  \begin{equation}\label{eqn:minfro2}
    \mtx{X}_\infty:= \arg \min_{\mtx{X}}\{\|\mtx{X}\|_F^2~:~\mtx{X}\text{ is a solution of \eqref{eqn:nuc_norm}}\}.
  \end{equation}
  Assume that the $f_i(\mtx{X})$'s, $1 \le i \le m$, are convex and
  lower semi-continuous. Then
\begin{equation}\label{eqn:limitXmustar2}
  \lim_{\tau\to\infty}\|\mtx{X}_{\tau}^{\star}-\mtx{X}_\infty\|_F=0.
\end{equation}
\end{theorem}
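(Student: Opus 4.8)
The plan is to treat this as a Tikhonov-type minimum-norm selection argument. Writing $f_\tau(\mtx{X}) = \tau\bigl(\|\mtx{X}\|_* + \tfrac{1}{2\tau}\|\mtx{X}\|_F^2\bigr)$, the minimizer of $f_\tau$ over the feasible set $\mathcal{C} = \{\mtx{X} : f_i(\mtx{X}) \le 0, \ \forall i\}$ coincides with the minimizer of $\|\mtx{X}\|_* + \tfrac{1}{2\tau}\|\mtx{X}\|_F^2$; as $\tau \to \infty$ the penalty $\tfrac{1}{2\tau}\|\cdot\|_F^2$ vanishes, and one expects it to act purely as a tie-breaker selecting the minimum-Frobenius-norm element among all nuclear-norm minimizers, which is exactly $\mtx{X}_\infty$. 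Throughout I would use that $\mathcal{C}$ is closed and convex (each $f_i$ is convex and lower semicontinuous, so every sublevel set $\{f_i \le 0\}$ is closed and convex), that $f_\tau$ is strictly convex by virtue of the Frobenius term, so $\mtx{X}_\tau^\star$ is well defined and unique, and that $\mtx{X}_\infty$ is unique because the Frobenius norm is strictly convex on the (convex) set of nuclear-norm minimizers.

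The engine of the proof is a pair of elementary comparison inequalities. Write $\nu := \|\mtx{X}_\infty\|_* = \min_{\mtx{X} \in \mathcal{C}} \|\mtx{X}\|_*$. Since $\mtx{X}_\tau^\star$ minimizes $f_\tau$ over $\mathcal{C}$ and $\mtx{X}_\infty \in \mathcal{C}$,
\[
\tau \|\mtx{X}_\tau^\star\|_* + \tfrac12 \|\mtx{X}_\tau^\star\|_F^2 \le \tau \|\mtx{X}_\infty\|_* + \tfrac12 \|\mtx{X}_\infty\|_F^2 = \tau\nu + \tfrac12\|\mtx{X}_\infty\|_F^2,
\]
while feasibility of $\mtx{X}_\tau^\star$ gives $\|\mtx{X}_\tau^\star\|_* \ge \nu$, hence $\tau\|\mtx{X}_\tau^\star\|_* \ge \tau\nu$. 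Substituting this into the left-hand side and cancelling $\tau\nu$ yields the key a priori bound $\|\mtx{X}_\tau^\star\|_F \le \|\mtx{X}_\infty\|_F$, valid for every $\tau > 0$. Feeding it back into the first inequality also gives $\nu \le \|\mtx{X}_\tau^\star\|_* \le \nu + \tfrac{1}{2\tau}\|\mtx{X}_\infty\|_F^2$, so that $\|\mtx{X}_\tau^\star\|_* \to \nu$ as $\tau \to \infty$.

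With these bounds in hand the conclusion follows by a compactness-and-identification argument. The family $\{\mtx{X}_\tau^\star\}$ is bounded in Frobenius norm, hence relatively compact in the finite-dimensional matrix space, so any sequence $\tau_n \to \infty$ has a subsequence along which $\mtx{X}_{\tau_n}^\star \to \bar{\mtx{X}}$. I would then identify $\bar{\mtx{X}} = \mtx{X}_\infty$: closedness of $\mathcal{C}$ gives $\bar{\mtx{X}} \in \mathcal{C}$; continuity of the nuclear norm together with $\|\mtx{X}_{\tau_n}^\star\|_* \to \nu$ gives $\|\bar{\mtx{X}}\|_* = \nu$, so $\bar{\mtx{X}}$ is a nuclear-norm minimizer; and continuity of the Frobenius norm together with $\|\mtx{X}_\tau^\star\|_F \le \|\mtx{X}_\infty\|_F$ gives $\|\bar{\mtx{X}}\|_F \le \|\mtx{X}_\infty\|_F$. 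Since $\mtx{X}_\infty$ is by definition the unique minimum-Frobenius-norm nuclear-norm minimizer, this forces $\bar{\mtx{X}} = \mtx{X}_\infty$. As every subsequential limit equals $\mtx{X}_\infty$ and the family is bounded, the full family converges, giving $\lim_{\tau\to\infty}\mtx{X}_\tau^\star = \mtx{X}_\infty$, which is \eqref{eqn:limitXmustar2}.

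The main obstacle is the clean extraction of the uniform Frobenius bound $\|\mtx{X}_\tau^\star\|_F \le \|\mtx{X}_\infty\|_F$ — everything downstream is soft analysis once that estimate is available — and, at the identification stage, pinning the limit down \emph{uniquely} rather than merely as some nuclear-norm minimizer; this is where strict convexity of the Frobenius norm (hence uniqueness of $\mtx{X}_\infty$) is indispensable, and it is also what upgrades subsequential convergence to convergence of the whole family. I would additionally verify the standing existence facts the statement presupposes, namely that minimizers over the closed set $\mathcal{C}$ are attained; this is immediate since the nuclear norm is coercive ($\|\mtx{X}\|_* \ge \|\mtx{X}\|_F$), so its sublevel sets intersected with $\mathcal{C}$ are compact.
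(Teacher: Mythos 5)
Your proposal is correct and follows essentially the same route as the paper's own proof: the same two comparison inequalities (optimality of $\mtx{X}_\tau^\star$ for $f_\tau$ and feasibility giving $\|\mtx{X}_\tau^\star\|_* \ge \|\mtx{X}_\infty\|_*$) combined to yield the uniform bound $\|\mtx{X}_\tau^\star\|_F \le \|\mtx{X}_\infty\|_F$, followed by the identical compactness-and-identification argument using lower semicontinuity of the $f_i$ and uniqueness of $\mtx{X}_\infty$. The only cosmetic difference is that you extract the explicit rate $\|\mtx{X}_\tau^\star\|_* \le \|\mtx{X}_\infty\|_* + \tfrac{1}{2\tau}\|\mtx{X}_\infty\|_F^2$ where the paper argues via $\limsup$/$\liminf$.
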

\begin{proof}
It follows from the definition of $\mtx{X}_{\tau}^{\star}$ and
$\mtx{X}_\infty$ that
\begin{equation}\label{eqn:XmustarX0}
  \|\mtx{X}_{\tau}^{\star}\|_*+\frac{1}{2\tau}\|\mtx{X}_{\tau}^{\star}\|_F^2\leq
  \|\mtx{X}_{\infty}\|_*+\frac{1}{2\tau}\|\mtx{X}_{\infty}\|_F^2,\quad \text{ and } \quad
  \|\mtx{X}_{\infty}\|_*\leq\|\mtx{X}_{\tau}^{\star}\|_*.
\end{equation}
Summing these two inequalities gives
\begin{equation}\label{eqn:boundedXmustar}
\|\mtx{X}_{\tau}^{\star}\|_F^2\leq\|\mtx{X}_{\infty}\|_F^2,
\end{equation}
which implies that $\|\mtx{X}_{\tau}^{\star}\|_F^2$ is bounded
uniformly in $\tau$.  Thus, we would prove the theorem if we could
establish that any convergent subsequence
$\{\mtx{X}^{\star}_{\tau_k}\}_{k \ge 1}$ must converge to
$\mtx{X}_\infty$.

Consider an arbitrary converging subsequence
$\{\mtx{X}^{\star}_{\tau_k}\}$ and set $\mtx{X}_c := \lim_{k \goto
  \infty} \mtx{X}^{\star}_{\tau_k}$. Since for each $1 \le i \le m$,
$f_i(\mtx{X}^{\star}_{\tau_k}) \le 0$ and $f_i$ is lower
semi-continuous, $\mtx{X}_c$ obeys
\begin{equation}\label{eqn:constaintXc2}
f_i(\mtx{X}_c)\le 0, \quad i=1,\ldots,m.
\end{equation}
Furthermore, since $\|\mtx{X}_{\tau}^{\star}\|_F^2$ is bounded,
\eqref{eqn:XmustarX0} yields
$$
\limsup_{\tau\to\infty}\|\mtx{X}_{\tau}^{\star}\|_*\leq\|\mtx{X}_{\infty}\|_*,
\quad
\|\mtx{X}_{\infty}\|_*\leq\liminf_{\tau\to\infty}\|\mtx{X}_{\tau}^{\star}\|_*.
$$
An immediate consequence is
$\lim_{\tau\to\infty}\|\mtx{X}_{\tau}^{\star}\|_*=\|\mtx{X}_{\infty}\|_*$
and, therefore, $\|\mtx{X}_{c}\|_*=\|\mtx{X}_{\infty}\|_*$. This shows
that $\mtx{X}_c$ is a solution to \eqref{eqn:min}. Now it follows from
the definition of $\mtx{X}_\infty$ that $\|\mtx{X}_c\|_F \geq
\|\mtx{X}_{\infty}\|_F$, while we also have $\|\mtx{X}_c\|_F \leq
\|\mtx{X}_{\infty}\|_F$ because of \eqref{eqn:boundedXmustar}.  We
conclude that $\|\mtx{X}_c\|_F =\|\mtx{X}_{\infty}\|_F$ and thus
$\mtx{X}_c=\mtx{X}_{\infty}$ since $\mtx{X}_{\infty}$ is unique.
\end{proof}

\section{Convergence Analysis}
\label{sec:conv}

This section establishes the convergence of the SVT iterations.  We
begin with the simpler proof of the convergence of \eqref{eqn:iter} in
the special case of the matrix completion problem, and then present
the argument for the more general constraints
\eqref{eqn:itergeneral2}. We hope that this progression will make the
second and more general proof more transparent.

\subsection{Convergence for matrix completion}

We begin by recording a lemma which establishes the strong convexity
of the objective $f_\tau$.
\begin{lemma}
  \label{lem:alpha}
  Let $\mtx{Z} \in \partial f_\tau(\mtx{X})$ and $\mtx{Z}' \in \partial
  f_\tau(\mtx{X}')$. Then
\begin{equation}
\label{eq:alpha}
\<\mtx{Z} - \mtx{Z}', \mtx{X} - \mtx{X}'\> \ge \|\mtx{X} - \mtx{X}'\|_F^2.
\end{equation}
\end{lemma}
\begin{proof}
  An element $\mtx{Z}$ of $\partial f_\tau(\mtx{X})$ is of the form
  $\mtx{Z} = \tau \mtx{Z}_0 + \mtx{X}$, where $\mtx{Z}_0 \in \partial
  \|\mtx{X}\|_*$, and similarly for $\mtx{Z}'$. This gives
\[
\<\mtx{Z} - \mtx{Z}', \mtx{X} - \mtx{X}'\> = \tau \, \<\mtx{Z}_0 - \mtx{Z}_0', \mtx{X} - \mtx{X}'\> + \|\mtx{X}-\mtx{X}'\|_F^2
\]
and it thus suffices to show that the first term of the right-hand
side is nonnegative. From \eqref{eqn:subdiffNorm}, we have that any
subgradient of the nuclear norm at $\mtx{X}$ obeys $\|\mtx{Z}_0\|_2
\le 1$ and $\<\mtx{Z}_0, \mtx{X}\> = \|\mtx{X}\|_*$. In particular,
this gives
\begin{equation*}
|\<\mtx{Z}_0, \mtx{X}'\>|  \le \|\mtx{Z}_0\|_2
\|\mtx{X}'\|_* \le \|\mtx{X}'\|_*, \qquad
|\<\mtx{Z}'_0, \mtx{X}\>|  \le \|\mtx{Z}'_0\|_2
\|\mtx{X}\|_* \le \|\mtx{X}\|_*.
\end{equation*}
Whence,
\begin{align*}
 \<\mtx{Z}_0 - \mtx{Z}_0', \mtx{X} - \mtx{X}'\> & =  \<\mtx{Z}_0 , \mtx{X} \>  +  \<\mtx{Z}_0',\mtx{X}'\>  - \<\mtx{Z}_0, \mtx{X}'\>  -  \<\mtx{Z}_0', \mtx{X}\>\cr
& =    \|\mtx{X}\|_*  +  \|\mtx{X}'\|_*   - \<\mtx{Z}_0, \mtx{X}'\>  -  \<\mtx{Z}_0', \mtx{X}\> \ge 0,
\end{align*}
which proves the lemma.
\end{proof}

This lemma is key in showing that the SVT algorithm \eqref{eqn:iter} converges.
\begin{theorem}
\label{thm:converge} Suppose that the sequence of step sizes obeys
$0 < \inf \delta_k \le \sup \delta_k < 2$. Then the sequence
$\{\mtx{X}^k\}$ obtained via \eqref{eqn:iter} converges to the
unique solution of \eqref{eqn:minnuc+fro}.
\end{theorem}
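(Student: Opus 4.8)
The plan is to read the iteration \eqref{eqn:iter} as a projected subgradient ascent on the dual of \eqref{eqn:minnuc+fro} and to run the classical Lyapunov argument for such schemes, with Lemma \ref{lem:alpha} supplying the crucial strong monotonicity. First I would record the two optimality facts I will need. The shrinkage step $\mtx{X}^k = \mathcal{D}_\tau(\mtx{Y}^{k-1})$ is, by Theorem \ref{thm:prox} (applicable since $\mtx{Y}^{k-1} = \mathcal{P}_\Omega(\mtx{Y}^{k-1})$ by the sparsity induction), the minimizer of $\tau\|\mtx{X}\|_* + \frac12\|\mtx{X}-\mtx{Y}^{k-1}\|_F^2$, whose first-order condition $\mymathbf{0} \in \mtx{X}^k - \mtx{Y}^{k-1} + \tau\partial\|\mtx{X}^k\|_*$ rewrites as
\[
\mtx{Y}^{k-1} \in \partial f_\tau(\mtx{X}^k).
\]
On the other side, \eqref{eqn:minnuc+fro} is convex with affine constraints, so strong duality holds and a saddlepoint $(\mtx{X}^\star, \mtx{Y}^\star)$ of the Lagrangian exists; stationarity in $\mtx{X}$ gives $\mathcal{P}_\Omega \mtx{Y}^\star \in \partial f_\tau(\mtx{X}^\star)$ and feasibility gives $\mathcal{P}_\Omega(\mtx{M}-\mtx{X}^\star) = \mymathbf{0}$. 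Replacing $\mtx{Y}^\star$ by $\mathcal{P}_\Omega\mtx{Y}^\star$ (which alters neither the Lagrangian value nor the analysis), I may assume $\mtx{Y}^\star \in \range(\mathcal{P}_\Omega)$ and $\mtx{Y}^\star \in \partial f_\tau(\mtx{X}^\star)$. Strict convexity of $f_\tau$ makes $\mtx{X}^\star$ unique.

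The heart of the argument is to track the dual error $\|\mtx{Y}^k - \mtx{Y}^\star\|_F^2$. Writing $\mtx{g}^k := \mathcal{P}_\Omega(\mtx{M}-\mtx{X}^k)$ for the update direction and using $\mathcal{P}_\Omega(\mtx{M}-\mtx{X}^\star)=\mymathbf{0}$, I get $\mtx{g}^k = -\mathcal{P}_\Omega(\mtx{X}^k - \mtx{X}^\star)$, so expanding $\mtx{Y}^k = \mtx{Y}^{k-1} + \delta_k\mtx{g}^k$ yields
\[
\|\mtx{Y}^k-\mtx{Y}^\star\|_F^2 = \|\mtx{Y}^{k-1}-\mtx{Y}^\star\|_F^2 + 2\delta_k\<\mtx{Y}^{k-1}-\mtx{Y}^\star, \mtx{g}^k\> + \delta_k^2\|\mtx{g}^k\|_F^2.
\]
Since $\mtx{Y}^{k-1}$ and $\mtx{Y}^\star$ both lie in $\range(\mathcal{P}_\Omega)$ and $\mathcal{P}_\Omega$ is self-adjoint and idempotent, the cross term equals $-\<\mtx{Y}^{k-1}-\mtx{Y}^\star, \mtx{X}^k-\mtx{X}^\star\>$; invoking $\mtx{Y}^{k-1}\in\partial f_\tau(\mtx{X}^k)$, $\mtx{Y}^\star\in\partial f_\tau(\mtx{X}^\star)$, and Lemma \ref{lem:alpha}, this is at most $-\|\mtx{X}^k-\mtx{X}^\star\|_F^2$. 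As projections are nonexpansive, $\|\mtx{g}^k\|_F^2 \le \|\mtx{X}^k-\mtx{X}^\star\|_F^2$. Combining gives the one-step estimate
\[
\|\mtx{Y}^k-\mtx{Y}^\star\|_F^2 \le \|\mtx{Y}^{k-1}-\mtx{Y}^\star\|_F^2 - \delta_k(2-\delta_k)\,\|\mtx{X}^k-\mtx{X}^\star\|_F^2.
\]

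Finally I would exploit the step-size hypothesis. Because $0 < \inf\delta_k \le \sup\delta_k < 2$, the concave map $\delta\mapsto\delta(2-\delta)$ is bounded below on the admissible range by some $\beta>0$, so the dual error is nonincreasing and the estimate telescopes: summing over $k$ gives $\beta\sum_k\|\mtx{X}^k-\mtx{X}^\star\|_F^2 \le \|\mtx{Y}^0-\mtx{Y}^\star\|_F^2 < \infty$. Hence $\|\mtx{X}^k-\mtx{X}^\star\|_F \to 0$, i.e.\ $\{\mtx{X}^k\}$ converges to the unique solution $\mtx{X}^\star$ of \eqref{eqn:minnuc+fro}.

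The step I expect to require the most care is the manipulation of the cross term: turning $\<\mtx{Y}^{k-1}-\mtx{Y}^\star, \mathcal{P}_\Omega(\mtx{X}^k-\mtx{X}^\star)\>$ into the plain inner product $\<\mtx{Y}^{k-1}-\mtx{Y}^\star, \mtx{X}^k-\mtx{X}^\star\>$, so that Lemma \ref{lem:alpha} applies, hinges on both iterates and the optimal dual living in $\range(\mathcal{P}_\Omega)$ (the induction $\mtx{Y}^k = \mathcal{P}_\Omega(\mtx{Y}^k)$ together with the projection of $\mtx{Y}^\star$). The second delicate point is the clean identification $\mtx{Y}^{k-1}\in\partial f_\tau(\mtx{X}^k)$ from Theorem \ref{thm:prox} and the saddlepoint/KKT characterization of $(\mtx{X}^\star,\mtx{Y}^\star)$; once these are secured, the telescoping is essentially mechanical.
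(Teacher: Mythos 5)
Your proposal is correct and follows essentially the same route as the paper: the same dual-error Lyapunov quantity (the paper tracks $\|\mathcal{P}_\Omega(\mtx{Y}^k-\mtx{Y}^\star)\|_F$, which coincides with your $\|\mtx{Y}^k-\mtx{Y}^\star\|_F$ once $\mtx{Y}^\star$ is projected onto $\range(\mathcal{P}_\Omega)$), the same use of Lemma \ref{lem:alpha} on the cross term via the optimality conditions $\mtx{Y}^{k-1}\in\partial f_\tau(\mtx{X}^k)$ and $\mathcal{P}_\Omega\mtx{Y}^\star\in\partial f_\tau(\mtx{X}^\star)$, and the same $2\delta_k-\delta_k^2\ge\beta>0$ step-size bound. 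The only cosmetic difference is that you conclude by telescoping the summable series, while the paper notes the monotone decrease of the dual error and deduces $\|\mtx{X}^k-\mtx{X}^\star\|_F\to 0$ directly.
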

\begin{proof}
  Let $(\mtx{X}^\star,\mtx{Y}^\star)$ be primal-dual optimal for the
  problem \eqref{eqn:minnuc+fro}. 
  The optimality conditions give
  \begin{align*}
    \mymathbf{0} & = \mtx{Z}^k - \mathcal{P}_{\Omega}(\mtx{Y}^{k-1})\cr
    \mymathbf{0} & = \mtx{Z}^\star -  \mathcal{P}_{\Omega}(\mtx{Y}^\star),
  \end{align*}
  for some $\mtx{Z}^k \in \partial f_\tau(\mtx{X}^k)$ and some
  $\mtx{Z}^\star \in \partial f_\tau(\mtx{X}^\star)$. We then deduce that
\[
(\mtx{Z}^k - \mtx{Z}^\star) - \mathcal{P}_{\Omega}(\mtx{Y}^{k-1} - \mtx{Y}^\star) = \mymathbf{0}
\]
and, therefore, it follows from Lemma \ref{lem:alpha} that
\begin{equation}
\label{eq:crucial2}
\<\mtx{X}^k - \mtx{X}^\star, \mathcal{P}_{\Omega}(\mtx{Y}^{k-1} - \mtx{Y}^\star)\> =
\<\mtx{Z}^k - \mtx{Z}^\star, \mtx{X}^k - \mtx{X}^\star\> \ge \|\mtx{X}^k - \mtx{X}^\star\|_F^2.
\end{equation}
We continue and observe that because $\cP_{\Omega}\mtx{X}^\star = \cP_{\Omega}\mtx{M}$,
\[
\|\mathcal{P}_{\Omega}(\mtx{Y}^{k} - \mtx{Y}^\star)\|_F  =
\|\mathcal{P}_{\Omega}(\mtx{Y}^{k-1} - \mtx{Y}^\star) + \delta_k \cP_{\Omega}(\mtx{X}^\star - \mtx{X}^k)\|_F.
\]
Therefore, setting $r_k = \|\mathcal{P}_{\Omega}(\mtx{Y}^{k} -
\mtx{Y}^\star)\|_F$,
\begin{align}
\label{eq:stepsize}
  r_k^2 & = r_{k-1}^2 - 2\delta_k \<\cP_{\Omega}(\mtx{Y}^{k-1} - \mtx{Y}^\star),\mtx{X}^k -
  \mtx{X}^\star\> + \delta_k^2 \|\cP_{\Omega}(\mtx{X}^\star - \mtx{X}^k)\|_F^2\cr & \le r_{k-1}^2 -
  2\delta_k \|\mtx{X}^k - \mtx{X}^\star\|_F^2 + \delta_k^2 \|\mtx{X}^k -
  \mtx{X}^\star\|_F^2
\end{align}
since for any matrix $\mtx{X}$, $\|\cP_\Omega(\mtx{X})\|_F \le
\|\mtx{X}\|_F$.  Under our assumptions about the size of $\delta_k$,
we have $2\delta_k - \delta_k^2 \ge \beta$ for all $k \ge 1$ and some
$\beta > 0$ and thus
\begin{equation}
\label{eq:crucial}
r_k^2 \le r_{k-1}^2 - \beta \|\mtx{X}^k - \mtx{X}^\star\|_F^2.
\end{equation}
Two properties follow from this:
\begin{enumerate}
\item The sequence $\{\|\mathcal{P}_{\Omega}(\mtx{Y}^{k} -
  \mtx{Y}^\star)\|_F\}$ is nonincreasing and, therefore, converges to
  a limit.
\item As a consequence, $\|\mtx{X}^k - \mtx{X}^\star\|_F^2 \goto 0$ as
  $k \goto \infty$.
\end{enumerate}
The theorem is established.
\end{proof}

\subsection{General convergence theorem}

Our second result is more general and establishes the convergence of
the SVT iterations to the solution of \eqref{eq:convex} under general
convex constraints. From now now, we will only assume that the
function $\cF(\mtx{X})$ is Lipschitz in the sense that
\begin{equation}
  \label{eq:Lipschitz}
  \|\cF(\mtx{X}) - \cF(\mtx{Y}\| \le L(\cF) \|\mtx{X} - \mtx{Y}\|_F,
\end{equation}
for some nonnegative constant $L(\cF)$.  Note that if $\cF$ is affine,
$\cF(\mtx{X}) = \vct{b} - \cA(\mtx{X})$, we have $L(\cF) = \|\cA\|_2$
where $\|\cA\|_2$ is the spectrum norm of the linear transformation
$\cA$ defined as $\|\cA\|_2:=\sup\{\|\cA(\mtx{X})\|_{\ell_2}:\|\mtx{X}\|_F=1\}$.
We also recall that $\cF(\mtx{X}) =
(f_1(\mtx{X}), \ldots, f_m(\mtx{X}))$ where each $f_i$ is convex, and
that the Lagrangian for the problem \eqref{eq:convex} is given by
\[
\cL(\mtx{X},\vct{y}) = f_\tau(\mtx{X}) + \<\vct{y}, \cF(\mtx{X})\>, \quad \vct{y} \ge \vct{0}.
\]
We will assume to simplify that strong duality holds which is
automatically true if the constraints obey constraint qualifications
such as Slater's condition \cite{BoydBook}.

We first establish the following preparatory lemma.
\begin{lemma}
  \label{teo:proj}
  Let $(\mtx{X}^\star,\vct{y}^\star)$ be a primal-dual optimal pair for
  \eqref{eq:convex}. Then for each $\delta > 0$, $\vct{y}^\star$ obeys
  \begin{equation}
    \label{eq:proj}
    \vct{y}^\star = [\vct{y}^\star + \delta \cF(\mtx{X}^\star)]_+.
  \end{equation}
\end{lemma}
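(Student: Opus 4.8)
The plan is to characterize the primal-dual optimal pair $(\mtx{X}^\star, \vct{y}^\star)$ through the Karush-Kuhn-Tucker (KKT) conditions for the problem \eqref{eq:convex}, and then verify directly that these conditions force the fixed-point relation \eqref{eq:proj} to hold componentwise. Since strong duality is assumed, optimality of $(\mtx{X}^\star, \vct{y}^\star)$ is equivalent to the saddlepoint condition for the Lagrangian $\cL(\mtx{X}, \vct{y}) = f_\tau(\mtx{X}) + \<\vct{y}, \cF(\mtx{X})\>$. The relevant consequences I would extract are the standard conditions: primal feasibility $f_i(\mtx{X}^\star) \le 0$ for all $i$, dual feasibility $\vct{y}^\star \ge \vct{0}$, and complementary slackness $y_i^\star f_i(\mtx{X}^\star) = 0$ for each $i$.

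First I would reduce the statement to a scalar claim: because $[\cdot]_+$ acts componentwise, it suffices to prove that for every index $i$ and every $\delta > 0$,
\begin{equation*}
y_i^\star = \max\bigl(y_i^\star + \delta f_i(\mtx{X}^\star),\, 0\bigr).
\end{equation*}
I would then split into the two cases distinguished by complementary slackness. If $y_i^\star > 0$, complementary slackness forces $f_i(\mtx{X}^\star) = 0$, so $y_i^\star + \delta f_i(\mtx{X}^\star) = y_i^\star > 0$ and the positive part leaves it unchanged. If $y_i^\star = 0$, then $y_i^\star + \delta f_i(\mtx{X}^\star) = \delta f_i(\mtx{X}^\star) \le 0$ by primal feasibility (together with $\delta > 0$), so its positive part is $0 = y_i^\star$. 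In both cases the desired equality holds, which establishes \eqref{eq:proj}.

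The only genuine point requiring care is justifying the saddlepoint/KKT conditions themselves—specifically dual feasibility and complementary slackness—from the assumption of strong duality. Here I would argue that the saddlepoint inequality $\cL(\mtx{X}^\star, \vct{y}) \le \cL(\mtx{X}^\star, \vct{y}^\star)$ for all $\vct{y} \ge \vct{0}$ means $\vct{y}^\star$ maximizes $\<\vct{y}, \cF(\mtx{X}^\star)\>$ over the nonnegative orthant; since $\cF(\mtx{X}^\star) \le \vct{0}$, this maximum is attained only when $\<\vct{y}^\star, \cF(\mtx{X}^\star)\> = 0$, giving complementary slackness, while finiteness of the supremum already encodes primal feasibility. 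I expect this verification of the KKT consequences to be the main (and only nontrivial) obstacle; once they are in hand, the case analysis is immediate. Because the relation must hold for \emph{every} $\delta > 0$, I would emphasize that the argument above uses only the signs of the quantities involved and never the specific value of $\delta$, so the conclusion is uniform in $\delta$, exactly as claimed.
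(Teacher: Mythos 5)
Your proof is correct, but it takes a different route from the paper's. You derive the KKT conditions (primal feasibility, dual feasibility, and complementary slackness) from the saddlepoint property and then verify the identity $y_i^\star = \max(y_i^\star + \delta f_i(\mtx{X}^\star),0)$ componentwise by a two-case analysis, whereas the paper never isolates complementary slackness: it starts from the same dual-optimality inequality $\<\vct{y}-\vct{y}^\star,\cF(\mtx{X}^\star)\>\le 0$ for all $\vct{y}\ge\vct{0}$, rescales $\cF(\mtx{X}^\star)$ by $\rho\ge 0$, and recognizes the result as the variational inequality characterizing $\vct{y}^\star$ as the Euclidean projection of $\vct{y}^\star+\rho\,\cF(\mtx{X}^\star)$ onto $\R^m_+$, which is exactly $[\cdot]_+$. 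Your argument is more elementary and self-contained (it uses only the sign structure of the orthant and makes the role of complementary slackness explicit, which is pedagogically transparent), while the paper's argument is shorter once the projection characterization is available and generalizes immediately to projections onto other convex cones where a componentwise case split would not be available. Your derivation of complementary slackness from the saddlepoint inequality is sound: taking $\vct{y}=\vct{0}$ gives $\<\vct{y}^\star,\cF(\mtx{X}^\star)\>\ge 0$, while primal feasibility and $\vct{y}^\star\ge\vct{0}$ give the reverse inequality, and since every term $y_i^\star f_i(\mtx{X}^\star)$ is nonpositive each must vanish. No gap.
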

\begin{proof}
  Recall that the projection $\vct{x}_0$ of a point $\vct{x}$ onto a convex
  set $\mathcal{C}$ is characterized by
\[
\begin{cases} \vct{x}_0 \in \mathcal{C},\cr
\<\vct{y}-\vct{x}_0, \vct{x} - \vct{x}_0\> \le 0, \,\, \forall \vct{y} \in \mathcal{C}.
\end{cases}
\]
In the case where $\mathcal{C} = \R^m_+ = \{\vct{x} \in \R^m : \vct{x} \ge \vct{0}\}$, this
condition becomes $\vct{x}_0 \ge \vct{0}$ and
\[
\<\vct{y} - \vct{x}_0, \vct{x} - \vct{x}_0\> \le 0, \,\,
\forall \vct{y} \ge \vct{0}.
\]

Now because $\vct{y}^\star$ is dual optimal we have
\[
\cL(\mtx{X}^\star,\vct{y}^\star) \ge \cL(\mtx{X}^\star,\vct{y}), \quad \forall \vct{y} \ge \vct{0}.
\]
Substituting the expression for the Lagrangian, this is equivalent to
\[
\<\vct{y} - \vct{y}^\star, \cF(\mtx{X}^\star)\> \le 0, \quad \forall \vct{y} \ge \vct{0},
\]
which is the same as
\[
\<\vct{y} - \vct{y}^\star, \vct{y}^\star + \rho \cF(\mtx{X}^\star) - \vct{y}^\star\> \le 0, \quad \forall \vct{y} \ge \vct{0}, \,\, \forall \rho \ge 0.
\]
Hence it follows that $\vct{y}^\star$ must be the projection of $\vct{y}^\star +
\rho \cF(\mtx{X}^\star)$ onto  the nonnegative orthant $\R^m_+$. Since
the projection of an arbitrary vector $\vct{x}$ onto $\R^m_+$ is given by
$\vct{x}_+$, our claim follows.
\end{proof}

We are now in the position to state our general convergence result.
\begin{theorem}
\label{thm:converge3}
Suppose that the sequence of step sizes obeys $0 < \inf \delta_k \le
\sup \delta_k < 2/\|L(\mathcal{F})\|^2$, where $L(\cF)$ is the
Lipschitz constant in \eqref{eq:Lipschitz}. Then assuming strong
duality, the sequence $\{\mtx{X}^k\}$ obtained via
\eqref{eqn:itergeneral2} converges to the unique solution of
\eqref{eq:convex}.
\end{theorem}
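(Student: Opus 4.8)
The plan is to mirror the argument of Theorem \ref{thm:converge}, but to measure progress with the dual iterates $\vct{y}^k$ instead of a projected primal residual. First I would fix a primal-dual optimal pair $(\mtx{X}^\star,\vct{y}^\star)$, which exists by the strong duality assumption, and recall from Lemma \ref{teo:proj} that $\vct{y}^\star = [\vct{y}^\star + \delta \cF(\mtx{X}^\star)]_+$ for every $\delta > 0$. Since both the update $\vct{y}^k = [\vct{y}^{k-1} + \delta_k \cF(\mtx{X}^k)]_+$ and this fixed-point relation for $\vct{y}^\star$ pass through the Euclidean projection onto $\R^m_+$, which is nonexpansive, setting $s_k := \|\vct{y}^k - \vct{y}^\star\|$ gives
\[
s_k^2 \le \|(\vct{y}^{k-1} - \vct{y}^\star) + \delta_k(\cF(\mtx{X}^k) - \cF(\mtx{X}^\star))\|^2 = s_{k-1}^2 + 2\delta_k \<\vct{y}^{k-1} - \vct{y}^\star, \cF(\mtx{X}^k) - \cF(\mtx{X}^\star)\> + \delta_k^2 \|\cF(\mtx{X}^k) - \cF(\mtx{X}^\star)\|^2.
\]
The Lipschitz hypothesis \eqref{eq:Lipschitz} bounds the last term by $\delta_k^2 L(\cF)^2 \|\mtx{X}^k - \mtx{X}^\star\|_F^2$, so everything reduces to controlling the cross term.

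The heart of the proof is the analogue of \eqref{eq:crucial2}, namely the claim
\[
\<\vct{y}^{k-1} - \vct{y}^\star, \cF(\mtx{X}^k) - \cF(\mtx{X}^\star)\> \le -\|\mtx{X}^k - \mtx{X}^\star\|_F^2.
\]
To obtain it I would write the first-order optimality conditions for the two primal minimizations: by construction $\mtx{X}^k$ minimizes $f_\tau(\mtx{X}) + \<\vct{y}^{k-1}, \cF(\mtx{X})\>$, and by the saddle-point property $\mtx{X}^\star$ minimizes $f_\tau(\mtx{X}) + \<\vct{y}^\star, \cF(\mtx{X})\>$. Because each $y_i \ge 0$ and the $f_i$ are convex, both coupling terms are convex in $\mtx{X}$, so there exist $\mtx{Z}^k \in \partial f_\tau(\mtx{X}^k)$ and $\mtx{Z}^\star \in \partial f_\tau(\mtx{X}^\star)$ for which $-\mtx{Z}^k$ is a subgradient of $\mtx{X} \mapsto \<\vct{y}^{k-1}, \cF(\mtx{X})\>$ at $\mtx{X}^k$ and $-\mtx{Z}^\star$ is a subgradient of $\mtx{X} \mapsto \<\vct{y}^\star, \cF(\mtx{X})\>$ at $\mtx{X}^\star$. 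Applying the subgradient inequality to each coupling term (evaluating one at $\mtx{X}^\star$ and the other at $\mtx{X}^k$) and adding yields $\<\mtx{Z}^k - \mtx{Z}^\star, \mtx{X}^k - \mtx{X}^\star\> \le -\<\vct{y}^{k-1} - \vct{y}^\star, \cF(\mtx{X}^k) - \cF(\mtx{X}^\star)\>$, and the claim follows upon invoking the strong-convexity estimate of Lemma \ref{lem:alpha}, which forces the left-hand side to be at least $\|\mtx{X}^k - \mtx{X}^\star\|_F^2$.

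Substituting the claim and the Lipschitz bound into the expansion of $s_k^2$ gives $s_k^2 \le s_{k-1}^2 - (2\delta_k - \delta_k^2 L(\cF)^2)\|\mtx{X}^k - \mtx{X}^\star\|_F^2$. The step-size restriction $\sup \delta_k < 2/L(\cF)^2$ together with $\inf \delta_k > 0$ makes $2\delta_k - \delta_k^2 L(\cF)^2 \ge \beta$ for some $\beta > 0$, exactly as in \eqref{eq:crucial}; hence $\{s_k\}$ is nonincreasing and convergent, and telescoping forces $\|\mtx{X}^k - \mtx{X}^\star\|_F^2 \goto 0$. Uniqueness of the limit is immediate because $f_\tau$ is strictly convex, so \eqref{eq:convex} has exactly one solution. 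The main obstacle I anticipate lies in the second paragraph: getting the optimality conditions and signs right for the \emph{nonsmooth} convex constraints $f_i$ — where the coupling subgradient is a nonnegative combination of subgradients of the $f_i$ rather than a single gradient $-\cA^*(\vct{y})$ as in the affine case — and confirming that convexity of the coupling terms is what allows Lemma \ref{lem:alpha} to apply to the combined subgradients $\mtx{Z}^k, \mtx{Z}^\star$. Everything else is a direct transcription of the matrix-completion argument, with nonexpansiveness of the orthant projection playing the role previously played by $\|\cP_\Omega(\mtx{X})\|_F \le \|\mtx{X}\|_F$.
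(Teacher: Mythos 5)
Your proposal is correct and follows essentially the same route as the paper's own proof: the key cross-term bound $\<\vct{y}^{k-1}-\vct{y}^\star,\cF(\mtx{X}^k)-\cF(\mtx{X}^\star)\>\le -\|\mtx{X}^k-\mtx{X}^\star\|_F^2$ is obtained exactly as in the paper from the first-order optimality of $\mtx{X}^k$ and $\mtx{X}^\star$ for their respective Lagrangians, convexity of the $f_i$ with nonnegative multipliers, and Lemma \ref{lem:alpha}, after which nonexpansiveness of the orthant projection and Lemma \ref{teo:proj} yield the same decreasing estimate on $\|\vct{y}^k-\vct{y}^\star\|^2$. The only cosmetic difference is that the paper states the optimality conditions as a variational inequality valid for all $\mtx{X}$ and then specializes, whereas you apply the subgradient inequalities directly at $\mtx{X}^\star$ and $\mtx{X}^k$; the content is identical.
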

\begin{proof}
  Let $(\mtx{X}^\star,\vct{y}^\star)$ be primal-dual optimal for the
  problem \eqref{eq:convex}. We claim that the optimality conditions
  give that for all $\mtx{X}$
  \begin{align}
    \label{eq:subtle}
    \<\mtx{Z}^k, \mtx{X} - \mtx{X}^k\>  + \<\vct{y}^{k-1}, \cF(\mtx{X}) - \cF(\mtx{X}^k)\> & \ge 0,\cr
    \<\mtx{Z}^\star, \mtx{X} - \mtx{X}^\star\> +\<\vct{y}^\star, \cF(\mtx{X}) - \cF(\mtx{X}^\star)\> & \ge 0,
  \end{align}
  for some $\mtx{Z}^k \in \partial f_\tau(\mtx{X}^k)$ and some
  $\mtx{Z}^\star \in \partial f_\tau(\mtx{X}^\star)$. We justify this
  assertion by proving one of the two inequalities since the other is
  exactly similar. For the first, $\mtx{X}^k$ minimizes
  $\cL(\mtx{X},\vct{y}^{k-1})$ over all $\mtx{X}$ and, therefore,
  there exist $\mtx{Z}^k \in \partial f_\tau(\mtx{X}^k)$ and $\mtx{Z}_i^k
  \in \partial f_i(\mtx{X}^k)$, $1 \le i \le m$, such that
\[
  \mtx{Z}^k + \sum_{i  =  1}^m y_i^{k-1} \mtx{Z}_i^k = 0.
\]
Now because each $f_i$ is convex,
\[
f_i(\mtx{X}) - f_i(\mtx{X}^k) \ge \< \mtx{Z}_i^k, \mtx{X} - \mtx{X}^k\>
\]
and, therefore,
\[
\<\mtx{Z}^k, \mtx{X} - \mtx{X}^k\> + \sum_{i = 1}^m y_i^{k-1}
(f_i(\mtx{X}) - f_i(\mtx{X}^k)) \ge \< \mtx{Z}^k + \sum_{i = 1}^m
y_i^{k-1} \mtx{Z}_i^k, \mtx{X} - \mtx{X}^k\> = 0.
\]
This is \eqref{eq:subtle}.

Now write the first inequality in \eqref{eq:subtle} for
$\mtx{X}^\star$, the second for $\mtx{X}^k$ and sum the two
inequalities. This gives
\[
\<\mtx{Z}^k - \mtx{Z}^\star, \mtx{X}^k - \mtx{X}^\star\> + \<\vct{y}^{k-1} - \vct{y}^\star, \cF(\mtx{X}^k) -
\cF(\mtx{X}^\star)\> \le 0.
\]
The rest of the proof is essentially the same as that of Theorem
\ref{thm:converge2}. It follows from Lemma \ref{lem:alpha} that
\begin{equation}
\label{eq:crucial3}
\<\vct{y}^{k-1} - \vct{y}^\star, \cF(\mtx{X}^k) - \cF(\mtx{X}^\star)\> \le
-\<\mtx{Z}^k - \mtx{Z}^\star, \mtx{X}^k - \mtx{X}^\star\> \le -\|\mtx{X}^k - \mtx{X}^\star\|_F^2.
\end{equation}
We continue and observe that because $\vct{y}^\star = [\vct{y}^\star + \delta_k
\cF(\mtx{X})]_+$ by Lemma \ref{teo:proj}, we have
\begin{align*}
\|\vct{y}^{k} - \vct{y}^\star\| & = \|[\vct{y}^{k-1} + \delta_k \cF(\mtx{X}^k)]_+
- [\vct{y}^\star + \delta_k \cF(\mtx{X}^\star)]_+\|\cr
& \le \|\vct{y}^{k-1} - \vct{y}^\star + \delta_k (\cF(\mtx{X}^k)
- \cF(\mtx{X}^\star))\|
\end{align*}
since the projection onto the convex set $\R^m_+$ is a contraction.
Therefore,
\begin{align*}
\|\vct{y}^{k} - \vct{y}^\star\|^2 & = \|\vct{y}^{k-1} -
\vct{y}^\star\|^2 + 2\delta_k \, \<\vct{y}^{k-1} - \vct{y}^\star,
\cF(\mtx{X}^k) - \cF(\mtx{X}^\star)\> + \delta_k^2 \|\cF(\mtx{X}^k)
- \cF(\mtx{X}^\star)\|^2\cr & \le \|\vct{y}^{k-1} - \vct{y}^\star\|^2 -
2\delta_k \|\mtx{X}^k - \mtx{X}^\star\|_F^2 + \delta_k^2 L^2\,
\|\mtx{X}^k - \mtx{X}^\star\|_F^2,
\end{align*}
where we have put $L$ instead of $L(\cF)$ for short.  Under our
assumptions about the size of $\delta_k$, we have $2\delta_k -
\delta_k^2 L^2\ge \beta$ for all $k \ge 1$ and some $\beta > 0$. Then
\begin{equation}
\label{eq:crucialno}
\|\vct{y}^{k} - \vct{y}^\star\|^2 \le \|\vct{y}^{k-1} - \vct{y}^\star\|^2 - \beta \|\mtx{X}^k - \mtx{X}^\star\|_F^2,
\end{equation}
and the conclusion is as before.
\end{proof}

The problem \eqref{eq:linear} with linear constraints can be reduced
to \eqref{eq:convex} by choosing
\[
\cF(\mtx{X})=\left[\begin{matrix}\vct{b}\cr
    -\vct{b}\end{matrix}\right]- \left[\begin{matrix}\cA\cr
    -\cA\end{matrix}\right]\mtx{X},
\]
and we have the following corollary:
\begin{corollary}
  \label{thm:converge2} Suppose that the sequence of step sizes obeys
  $0 < \inf \delta_k \le \sup \delta_k < 2/\|\cA\|_2^2$. Then the
  sequence $\{\mtx{X}^k\}$ obtained via \eqref{eqn:itergeneral}
  converges to the unique solution of \eqref{eq:linear}.
\end{corollary}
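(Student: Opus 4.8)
The plan is to deduce this from the general convergence result, Theorem \ref{thm:converge3}, by viewing \eqref{eq:linear} as the special case of \eqref{eq:convex} in which the constraint map is affine. First I would record the reduction indicated just before the statement: the equality $\cA(\mtx{X}) = \vct{b}$ is equivalent to the two affine inequalities $\cA(\mtx{X}) \ge \vct{b}$ and $-\cA(\mtx{X}) \ge -\vct{b}$, so \eqref{eq:linear} is an instance of \eqref{eq:convex}. Equivalently, and more economically, one takes $\cF(\mtx{X}) = \vct{b} - \cA(\mtx{X})$ and lets the Lagrange multiplier $\vct{y}$ range over all of $\R^m$ rather than over the nonnegative orthant, precisely because the constraint is an equality. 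I would then check that, under this identification, the Uzawa iteration \eqref{eqn:itergeneral2} specializes to \eqref{eqn:itergeneral}: the inner minimization $\arg\min_{\mtx{X}}\{f_\tau(\mtx{X}) + \<\vct{y}^{k-1}, \vct{b} - \cA(\mtx{X})\>\}$ equals $\cD_\tau(\cA^*(\vct{y}^{k-1}))$ by Theorem \ref{thm:prox} (after completing the square in $\mtx{X} - \cA^*(\vct{y}^{k-1})$), while the dual update carries no projection because the feasible set of multipliers is all of $\R^m$ and the Euclidean projection onto $\R^m$ is the identity.

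Next I would compute the Lipschitz constant of $\cF$. For the affine map $\cF(\mtx{X}) = \vct{b} - \cA(\mtx{X})$ one has $\|\cF(\mtx{X}) - \cF(\mtx{Y})\| = \|\cA(\mtx{X} - \mtx{Y})\| \le \|\cA\|_2 \|\mtx{X} - \mtx{Y}\|_F$, so that $L(\cF) = \|\cA\|_2$. Substituting this into the step-size hypothesis $\sup_k \delta_k < 2/L(\cF)^2$ of Theorem \ref{thm:converge3} reproduces exactly the bound $\sup_k \delta_k < 2/\|\cA\|_2^2$ in the statement. The point requiring care is that one must use $\cF = \vct{b} - \cA(\mtx{X})$ directly rather than the duplicated map stacking $\pm\cA$ displayed above the corollary; the stacked map carries a spurious factor $\sqrt{2}$ in its Lipschitz constant and would yield only the weaker bound $1/\|\cA\|_2^2$. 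The duplication is useful solely to phrase an equality inside the inequality template of \eqref{eq:convex}, and the nonnegativity constraint on the multiplier is exactly what the equality formulation removes; one should not let it inflate the Lipschitz constant when running the convergence argument.

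Finally I would verify that the proof of Theorem \ref{thm:converge3} transfers verbatim to the unconstrained-multiplier setting. The two places that invoke $\R^m_+$ are Lemma \ref{teo:proj} and the contraction step $\|[\vct{u}]_+ - [\vct{v}]_+\| \le \|\vct{u} - \vct{v}\|$. With $\vct{y}$ free in $\R^m$, the relevant projection is the identity, which is trivially nonexpansive, and Lemma \ref{teo:proj} degenerates to $\vct{y}^\star = \vct{y}^\star + \delta\,\cF(\mtx{X}^\star)$, i.e. $\cF(\mtx{X}^\star) = \vct{b} - \cA(\mtx{X}^\star) = \vct{0}$, which is merely primal feasibility of the optimum and holds automatically. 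Everything else transfers without change: the subgradient optimality conditions \eqref{eq:subtle}, the application of Lemma \ref{lem:alpha} to get $\<\vct{y}^{k-1} - \vct{y}^\star, \cF(\mtx{X}^k) - \cF(\mtx{X}^\star)\> \le -\|\mtx{X}^k - \mtx{X}^\star\|_F^2$, and the telescoping estimate leading to $\|\vct{y}^{k} - \vct{y}^\star\|^2 \le \|\vct{y}^{k-1} - \vct{y}^\star\|^2 - \beta\,\|\mtx{X}^k - \mtx{X}^\star\|_F^2$, with $2\delta_k - \delta_k^2\|\cA\|_2^2 \ge \beta > 0$ guaranteed by the step-size hypothesis. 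Summing over $k$ then forces $\|\mtx{X}^k - \mtx{X}^\star\|_F \to 0$, and uniqueness of the limit is inherited from the strict convexity of $f_\tau$. I expect the only genuinely delicate point to be the bookkeeping of the Lipschitz constant; once the reduction is arranged with an unconstrained multiplier, the remainder is a direct appeal to Theorem \ref{thm:converge3}.
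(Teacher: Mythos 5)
Your proof is correct, and it is in fact more complete than what the paper provides. The paper's own justification consists of the stacked reduction $\cF(\mtx{X}) = \bigl[\begin{smallmatrix}\vct{b}\\ -\vct{b}\end{smallmatrix}\bigr] - \bigl[\begin{smallmatrix}\cA\\ -\cA\end{smallmatrix}\bigr]\mtx{X}$, which---as you correctly observe---has $L(\cF)^2 = 2\|\cA\|_2^2$ and therefore only yields convergence for $\sup\delta_k < 1/\|\cA\|_2^2$; the paper then remarks that ``an argument identical to the proof of Theorem \ref{thm:converge} would remove the extra factor of two'' and explicitly omits the details. Your third paragraph supplies exactly those omitted details: running the Uzawa/Theorem \ref{thm:converge3} argument with $\cF(\mtx{X}) = \vct{b} - \cA(\mtx{X})$, an unconstrained multiplier, and the identity in place of the projection $[\cdot]_+$, so that Lemma \ref{teo:proj} degenerates to primal feasibility $\cF(\mtx{X}^\star) = \vct{0}$ and the Lipschitz constant is $\|\cA\|_2$ rather than $\sqrt{2}\,\|\cA\|_2$. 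This gives the stated threshold $2/\|\cA\|_2^2$ and is precisely the generalization of the matrix-completion proof (where $\cA^*\cA = \cP_\Omega$ and $\|\cA\|_2 = 1$) that the authors had in mind. The one point you pass over lightly, as does the paper, is the existence of a dual optimal $\vct{y}^\star$; for affine equality constraints this follows from strong duality under feasibility, consistent with the standing assumption in Theorem \ref{thm:converge3}.
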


Let $\|\cA\|_2 := \sup \{\|\cA(\mtx{X})\|_{F} : \|\mtx{X}\|_F = 1\}$.
With $\cF(\mtx{X})$ given as above, we have $|L(\cF)|^2 =
2\|\cA\|^2_2$ and thus, Theorem \ref{thm:converge3} guarantees
convergence as long as $0 < \inf \delta_k \le \sup \delta_k <
1/\|\cA\|_2^2$. However, an argument identical to the proof of Theorem
\ref{thm:converge} would remove the extra factor of two. We omit the
details.

\section{Implementation and Numerical Results}
\label{sec:num}

This section provides implementation details of the SVT algorithm---as
to make it practically effective for matrix completion---such as the
numerical evaluation of the singular value thresholding operator, the
selection of the step size $\delta_k$, the selection of a stopping
criterion, and so on. This section also introduces several numerical
simulation results which demonstrate the performance and effectiveness
of the SVT algorithm. We show that $30,000 \times 30,000$ matrices of
rank 10 are recovered from just about 0.4\% of their sampled entries
in a matter of a few minutes on a modest desktop computer with a 1.86
GHz CPU (dual core with Matlab's multithreading option enabled) and 3
GB of memory.

\subsection{Implementation details}
\label{sec:implementation}

\subsubsection{Evaluation of the singular value thresholding operator}

To apply the singular value tresholding operator at level $\tau$ to an
input matrix, it suffices to know those singular values and
corresponding singular vectors above the threshold $\tau$. In the
matrix completion problem, the singular value thresholding operator is
applied to sparse matrices $\{\mtx{Y}^k\}$ since the number of sampled
entries is typically much lower than the number of entries in the
unknown matrix $\mtx{M}$, and we are hence interested in numerical
methods for computing the dominant singular values and singular
vectors of large sparse matrices. The development of such methods is a
relatively mature area in scientific computing and numerical linear
algebra in particular. In fact, many high-quality packages are readily
available. Our implementation uses PROPACK, see \cite{Lar:Propack} for
documentation and availability. One reason for this choice is
convenience: PROPACK comes in a Matlab and a Fortran version, and we
find it convenient to use the well-documented Matlab version. More
importantly, PROPACK uses the iterative Lanczos algorithm to compute
the singular values and singular vectors directly, by using the
Lanczos bidiagonalization algorithm with partial
reorthogonalization. In particular, PROPACK does not compute the
eigenvalues and eigenvectors of $(\mtx{Y}^k)^*\mtx{Y}^k$ and
$\mtx{Y}^k(\mtx{Y}^k)^*$, or of an augmented matrix as in the Matlab
built-in function `\texttt{svds}' for example. Consequently, PROPACK
is an efficient---both in terms of number of flops and storage
requirement---and stable package for computing the dominant singular
values and singular vectors of a large sparse matrix. For information,
the available documentation \cite{Lar:Propack} reports a speedup
factor of about ten over Matlab's `\texttt{svds}'. Furthermore, the
Fortran version of PROPACK is about 3--4 times faster than the Matlab
version. Despite this significant speedup, we have only used the
Matlab version but since the singular value shrinkage operator is
by-and-large the dominant cost in the SVT algorithm, we expect that a
Fortran implementation would run about 3 to 4 times faster.

As for most SVD packages, though one can specify the number of
singular values to compute, PROPACK can not automatically compute only
those singular values exceeding the threshold $\tau$.  One must
instead specify the number $s$ of singular values ahead of time, and
the software will compute the $s$ largest singular values and
corresponding singular vectors. To use this package, we must then
determine the number $s_k$ of singular values of $\mtx{Y}^{k-1}$ to be
computed at the $k$th iteration. We use the following simple
method. Let $r_{k-1}=\rank(\mtx{X}^{k-1})$ be the number of nonzero
singular values of $\mtx{X}^{k-1}$ at the previous iteration. Set $s_k
= r_{k-1}+1$ and compute the first $s_{k}$ singular values of
$\mtx{Y}^{k-1}$. If some of the computed singular values are already
smaller than $\tau$, then $s_k$ is a right choice. Otherwise,
increment $s_k$ by a predefined integer $\ell$ repeatedly until some
of the singular values fall below $\tau$.  In the experiments, we
choose $\ell=5$.  Another rule might be to repeatedly multiply $s_k$
by a positive number---e.g.~2---until our criterion is
met. Incrementing $s_k$ by a fixed integer works very well in
practice; in our experiments, we very rarely need more than one
update.

We note that it is not necessary to rerun the Lanczos iterations for
the first $s_k$ vectors since they have been already computed; only a
few new singular values ($\ell$ of them) need to be numerically
evaluated. This can be done by modifying the PROPACK routines. We
have not yet modified PROPACK, however. Had we done so, our run times
would be decreased.

\subsubsection{Step sizes}

There is a large literature on ways of selecting a step size but for
simplicity, we shall use step sizes that are independent of the
iteration count; that is $\delta_k = \delta$ for $k = 1, 2, \ldots$.
From Theorem \ref{thm:converge}, convergence for the completion
problem is guaranteed \eqref{eqn:iter} provided that $0 < \delta <
2$. This choice is, however, too conservative and the convergence is
typically slow. In our experiments, we use instead
\begin{equation}
  \label{eq:heuristic}
  \delta  = 1.2 \, \frac{n_1 n_2}{m},
\end{equation}
i.e.~$1.2$ times the undersampling ratio. We give a heuristic
justification below.

Consider a fixed matrix $\mtx{A} \in \R^{n_1 \times n_2}$.  Under the
assumption that the column and row spaces of $\mtx{A}$ are not well
aligned with the vectors taken from the canonical basis of $\R^{n_1}$
and $\R^{n_2}$ respectively---the {\em incoherence assumption} in
\cite{CR:XXX:08}---then with very large probability over the choices
of $\Omega$, we have
\begin{equation}
  \label{eq:weakRIP}
  (1-\epsilon)  p \,  \|\mtx{A}\|_F^2  \le  \|\mathcal{P}_\Omega(\mtx{A})\|_F^2 \le   (1+\epsilon)  p \,  \|\mtx{A}\|_F^2, \quad p := m/(n_1n_2),
\end{equation}
provided that the rank of $\mtx{A}$ is not too large.  The probability
model is that $\Omega$ is a set of sampled entries of cardinality $m$
sampled uniformly at random so that all the choices are equally
likely. In \eqref{eq:weakRIP}, we want to think of $\epsilon$ as a
small constant, e.g.~smaller than 1/2. In other words, the `energy' of
$\mtx{A}$ on $\Omega$ (the set of sampled entries) is just about
proportional to the size of $\Omega$. The near isometry
\eqref{eq:weakRIP} is a consequence of Theorem 4.1 in
\cite{CR:XXX:08}, and we omit the details.

Now returning to the proof of Theorem \ref{thm:converge}, we see that a sufficient
condition for the convergence of \eqref{eqn:iter} is
\[
\exists\beta>0,\quad -2\delta \|\mtx{X}^\star - \mtx{X}^k\|_F^2 +
\delta^2 \|\cP_{\Omega}(\mtx{X}^\star -
\mtx{X}^k)\|_F^2\leq-\beta\|\mtx{X}^\star - \mtx{X}^k\|_F^2,
\]
compare \eqref{eq:crucial}, which is equivalent to
\[
0<\delta<2\frac{\|\mtx{X}^\star - \mtx{X}^k\|_F^2}
{\|\cP_{\Omega}(\mtx{X}^\star - \mtx{X}^k)\|_F^2}.
\]
Since $\|\cP_\Omega(\mtx{X})\|_F \le \|\mtx{X}\|_F$ for any matrix
$\mtx{X} \in \R^{n_1 \times n_2}$, it is safe to select $\delta <
2$. But suppose that we could apply \eqref{eq:weakRIP} to the matrix
$\mtx{A} = \mtx{X}^\star - \mtx{X}^k$. Then we could take $\delta$
inversely proportional to $p$; e.g.~with $\epsilon = 1/4$, we could
take $\delta \le 1.6 p^{-1}$. Below, we shall use the value $\delta =
1.2 p^{-1}$ which allows us to take large steps and still provides
convergence, at least empirically.

The reason why this is not a rigorous argument is that
\eqref{eq:weakRIP} cannot be applied to $\mtx{A} = \mtx{X}^\star -
\mtx{X}^k$ even though this matrix difference may obey the incoherence
assumption.  The issue here is that $\mtx{X}^\star - \mtx{X}^k$ is not
a fixed matrix, but rather depends on $\Omega$ since the iterates
$\{\mtx{X}^k\}$ are computed with the knowledge of the sampled set.

\subsubsection{Initial steps}

The SVT algorithm starts with $\mtx{Y}^0=\mtx{0}$, and we want to
choose a large $\tau$ to make sure that the solution of
\eqref{eqn:minnuc+fro} is close enough to a solution of
\eqref{eqn:min}. Define $k_0$ as that integer obeying
\begin{equation}\label{eqn:k0}
\frac{\tau}{\delta\|\mathcal{P}_{\Omega}(\mtx{M})\|_2} \in (k_0-1,
k_0].
\end{equation}
Since $\mtx{Y}^0=\mtx{0}$, it is not difficult to see that
\[
\mtx{X}^k = \mtx{0}, \quad \mtx{Y}^k= k\delta\,
\mathcal{P}_{\Omega}(\mtx{M}), \quad k = 1, \ldots, k_0.
\]
To save work, we may simply skip the computations of
$\mtx{X}^1,\ldots,\mtx{X}^{k_0}$, and start the iteration by computing
$\mtx{X}^{k_0+1}$ from $\mtx{Y}^{k_0}$.

This strategy is a special case of a {\em kicking device} introduced
in \cite{ODY:XXX:08}; the main idea of such a kicking scheme is that
one can `jump over' a few steps whenever possible. Just like in the
aforementioned reference, we can develop similar kicking strategies
here as well.  Because in our numerical experiments the kicking is
rarely triggered, we forgo the description of such strategies.

\subsubsection{Stopping criteria}

Here, we discuss stopping criteria for the sequence of SVT iterations
\eqref{eqn:iter}, and present two possibilities.

The first is motivated by the first-order optimality conditions or KKT
conditions tailored to the minimization problem
\eqref{eqn:minnuc+fro}. By \eqref{eqn:argminequiv} and letting
$\partial_{\mtx{Y}} g_0(\mtx{Y})=\mtx{0}$ in \eqref{eqn:partialg0}, we
see that the solution $\mtx{X}^\star_{\tau}$ to \eqref{eqn:minnuc+fro}
must also verify
\begin{equation}\label{eqn:KKT}
\begin{cases}
\mtx{X}=\mathcal{D}_{\tau}(\mtx{Y}),\cr
\mathcal{P}_{\Omega}(\mtx{X}-\mtx{M})=\mtx{0},
\end{cases}
\end{equation}
where $\mtx{Y}$ is a matrix vanishing outside of $\Omega^c$.
Therefore, to make sure that $\mtx{X}^k$ is close to
$\mtx{X}^{\star}_\tau$, it is sufficient to check how close
$(\mtx{X}^k,\mtx{Y}^{k-1})$ is to obeying \eqref{eqn:KKT}. By
definition, the first equation in \eqref{eqn:KKT} is always
true. Therefore, it is natural to stop \eqref{eqn:iter} when the error
in the second equation is below a specified tolerance. We suggest
stopping the algorithm when
\begin{equation}\label{eqn:stop0}
\frac{\|\mathcal{P}_{\Omega}(\mtx{X}^k-\mtx{M})\|_F}{\|\mathcal{P}_{\Omega}(\mtx{M})\|_F}\leq\epsilon,
\end{equation}
where $\epsilon$ is a fixed tolerance, e.g.~$10^{-4}$. We provide a
short heuristic argument justifying this choice below.

In the matrix completion problem, we know that under suitable
assumptions
\[
\|\mathcal{P}_\Omega(\mtx{M})\|_F^2 \asymp p \, \|\mtx{M}\|_F^2,
\]
which is just \eqref{eq:weakRIP} applied to the fixed matrix $\mtx{M}$
(the symbol $\asymp$ here means that there is a constant $\epsilon$ as
in \eqref{eq:weakRIP}). Suppose we could also apply \eqref{eq:weakRIP}
to the matrix $\mtx{X}^k - \mtx{M}$ (which we rigorously cannot since
$\mtx{X}^k$ depends on $\Omega$), then we would have
\begin{equation}\label{eq:xk-m}
\|\mathcal{P}_\Omega(\mtx{X}^k - \mtx{M})\|_F^2 \asymp p \,
\|\mtx{X}^k - \mtx{M}\|_F^2,
\end{equation}
and thus
\[
\frac{\|\mathcal{P}_{\Omega}(\mtx{X}^k-\mtx{M})\|_F}{\|\mathcal{P}_{\Omega}(\mtx{M})\|_F}\asymp
\frac{\|\mtx{X}^k - \mtx{M}\|_F}{\|\mtx{M}\|_F}.
\]
In words, one would control the relative reconstruction error by
controlling the relative error on the set of sampled locations.

A second stopping criterion comes from duality theory. Firstly, the
iterates $\mtx{X}^k$ are generally not feasible for
\eqref{eqn:minnuc+fro} although they become asymptotically
feasible. One can construct a feasible point from $\mtx{X}^k$ by
projecting it onto the affine space $\{\mtx{X} :
\mathcal{P}_{\Omega}(\mtx{X}) = \mathcal{P}_{\Omega}(\mtx{M})\}$ as
follows:
\[
\tilde{\mtx{X}}^k = \mtx{X}^k +  \mathcal{P}_{\Omega}(\mtx{M}-\mtx{X}^k).
\]
As usual let $f_\tau(\mtx{X}) = \tau \|\mtx{X}\|_* +\frac{1}{2}
\|\mtx{X}\|_F^2$ and denote by $p^\star$ the optimal value of
\eqref{eqn:minnuc+fro}. Since $\tilde{\mtx{X}}^k$ is feasible, we
have
\[
p^\star \le f_\tau(\tilde{\mtx{X}}^k) := b_k.
\]
Secondly, using the notations of Section \ref{sec:uzawa},
duality theory gives that
\[
a_k := g_0(\mtx{Y}^{k-1}) = {\cal L}(\mtx{X}^k,\mtx{Y}^{k-1}) \le p^\star.
\]
Therefore, $b_k - a_k$ is an upper bound on the duality gap and one
can stop the algorithm when this quantity falls below a given
tolerance.

For very large problems in which one holds $\mtx{X}^k$ in reduced SVD
form, one may not want to compute the projection $\tilde{\mtx{X}}^k$
since this matrix would not have low rank and would require
significant storage space (presumably, one would not want to spend
much time computing this projection either). Hence, the second method
only makes practical sense when the dimensions are not prohibitively
large, or when the iterates do not have low rank.

\subsubsection{Algorithm}

We conclude this section by summarizing the implementation details and
give the SVT algorithm for matrix completion below (Algorithm
\ref{alg:SVT}). Of course, one would obtain  a very similar
structure for the more general problems of the form \eqref{eq:linear}
and \eqref{eq:convex} with linear inequality constraints.  For
convenience, define for each nonnegative integer $s \le
\min\{n_1,n_2\}$,
\[
[\mtx{U}^k,\mtx{\Sigma}^k,\mtx{V}^k]_s, \quad k = 1, 2, \ldots,
\]
where $\mtx{U}^{k} = [\vct{u}_1^{k}, \ldots, \vct{u}_{s}^{k}]$ and  $\mtx{V}^{k} =
[\vct{v}_1^{k}, \ldots, \vct{v}_s^{k}]$ are the first $s$ singular vectors
of the matrix $\mtx{Y}^{k}$, and $\mtx{\Sigma}^{k}$ is a diagonal
matrix with the first $s$ singular values $\sigma^{k}_1, \ldots,
\sigma_s^{k}$ on the diagonal.

\begin{algorithm}[htb]
\caption{Singular Value Thresholding (SVT) Algorithm}{}
\label{alg:SVT} \centering \fbox{
\begin{minipage}{.9\textwidth}
  \vspace{4pt} \alginout{sampled set $\Omega$ and sampled entries
    $\mathcal{P}_{\Omega}(\mtx{M})$, step size $\delta$, tolerance
    $\epsilon$, parameter $\tau$, increment $\ell$, and maximum
    iteration count $k_{\max}$}  {$\mtx{X}^{\mathrm{opt}}$}
  \algdescript{Recover a low-rank matrix $\mtx{M}$ from a subset of
    sampled entries}  \vspace{6pt}

\begin{algtab}
  Set $\mtx{Y}^0 = k_0\delta \, \mathcal{P}_{\Omega}(\mtx{M})$ ($k_0$ is defined in \eqref{eqn:k0}) \\
  Set $r_0=0$\\
  \algforto{$k=1$}{$k_{\max}$}
  Set $s_k=r_{k-1}+1$\\
  \algrepeat
  Compute $[\mtx{U}^{k-1},\mtx{\Sigma}^{k-1},\mtx{V}^{k-1}]_{s_k}$\\
  Set $s_k = s_k + \ell$\\
 \alguntil{$\sigma_{s_k-\ell}^{k-1}\le\tau$}
 Set $r_{k} = \max\{j : \sigma^{k-1}_j > \tau\}$\\
 Set $\mtx{X}^k = \sum_{j = 1}^{r_k}
  (\sigma_j^{k-1}-\tau) \vct{u}_j^{k-1} \vct{v}^{k-1}_j$\\

 \algifthen{$\|\mathcal{P}_{\Omega}(\mtx{X}^k-\mtx{M})\|_F/\|\mathcal{P}_{\Omega}\mtx{M}\|_F
           \leq\epsilon$}{{\bf break}}

  Set
    $Y^{k}_{ij}=\begin{cases} 0&\mbox{if }(i,j)\not\in\Omega,\cr
      Y^{k-1}_{ij}+\delta(M_{ij}-X_{ij}^k)&\mbox{if }(i,j)\in\Omega \end{cases}
    $ \\
 \algend {\bf end} {\em  for $k$} \\
 Set $\mtx{X}^{\mathrm{opt}}= \mtx{X}^k$\\
 \algend
\end{algtab}
\end{minipage}}
\end{algorithm}


\subsection{Numerical results}
\label{sec:results}

\subsubsection{Linear equality constraints} 

Our implementation is in Matlab and all the computational results we
are about to report were obtained on a desktop computer with a 1.86
GHz CPU (dual core with Matlab's multithreading option enabled) and 3
GB of memory. In our simulations, we generate $n \times n$ matrices of
rank $r$ by sampling two $n\times r$ factors $\mtx{M}_L$ and
$\mtx{M}_R$ independently, each having i.i.d.~Gaussian entries, and
setting $\mtx{M}=\mtx{M}_L\mtx{M}_R^*$ as it is suggested in
\cite{CR:XXX:08}. The set of observed entries $\Omega$ is sampled
uniformly at random among all sets of cardinality $m$.

The recovery is
performed via the SVT algorithm (Algorithm \ref{alg:SVT}), and we use
\begin{equation}\label{eqn:stop}
\|\mathcal{P}_{\Omega}(\mtx{X}^k-\mtx{M})\|_F/
\|\mathcal{P}_{\Omega}\mtx{M}\|_F<10^{-4}
\end{equation}
as a stopping criterion.  As discussed earlier, the step sizes are
constant and we set $\delta=1.2 p^{-1}$.  Throughout this section, we
denote the output of the SVT algorithm by $\mtx{X}^{\mathrm{opt}}$.
The parameter $\tau$ is chosen empirically and set to $\tau = 5n$. A
heuristic argument is as follows. Clearly, we would like the term
$\tau \|\mtx{M}\|_*$ to dominate the other, namely,
$\frac{1}{2}\|\mtx{M}\|_F^2$. For products of Gaussian matrices as
above, standard random matrix theory asserts that the Frobenius norm
of $\mtx{M}$ concentrates around $n\sqrt{r}$, and that the nuclear
norm concentrates around about $nr$ (this should be clear in the
simple case where $r = 1$ and is generally valid). The value $\tau=5n$
makes sure that on the average, the value of $\tau \|\mtx{M}\|_*$ is
about $10$ times that of $\frac12\|\mtx{M}\|_F^2$ as long as the rank
is bounded away from the dimension $n$.

\begin{table}
\begin{center}
\begin{tabular}{cccc|ccc}\hline
  \multicolumn{4}{c|}{Unknown $\mtx{M}$}&\multicolumn{3}{c}{Computational results}\\ \hline
  size ($n\times n$)& rank ($r$) & $m/d_r$ & $m/n^2$ & time(s) & \# iters & relative error\\ \hline
  & 10 & 6 & 0.12 & 23 & 117 & $1.64\times10^{-4}$ \\
  $1,000\times 1,000$ & 50 & 4 & 0.39 & 196 & 114 & $1.59\times10^{-4}$ \\
  & 100 & 3 & 0.57 & 501 & 129 & $1.68\times10^{-4}$ \\ \hline

  & 10 & 6 & 0.024 & 147 & 123 & $1.73\times10^{-4}$ \\
 $5,000\times 5,000$ & 50 & 5 & 0.10 & 950 & 108 & $1.61\times10^{-4}$ \\
  & 100 & 4 & 0.158 & 3,339 & 123 & $1.72\times10^{-4}$ \\ \hline

  & 10 & 6 & 0.012 & 281 & 123 & $1.73\times10^{-4}$ \\
 $10,000\times 10,000$ & 50 & 5 & 0.050 & 2,096 & 110 & $1.65\times10^{-4}$ \\
  & 100 & 4 & 0.080 & 7,059 & 127 & $1.79\times10^{-4}$ \\ \hline

  & 10 & 6 & 0.006 & 588 & 124 & $1.73\times10^{-4}$ \\
 \raisebox{1.5ex}[0pt]{$20,000\times 20,000$} & 50 & 5 & 0.025 & 4,581 & 111 & $1.66\times10^{-4}$ \\ \hline

 $30,000\times30,000$ & 10 & 6 & 0.004 & 1,030 & 125 & $1.73\times10^{-4}$ \\ \hline
\end{tabular}
\end{center}
\caption{Experimental results for matrix
  completion. The rank $r$ is the rank of the unknown matrix
  $\mtx{M}$, $m/d_r$ is the ratio between the number of sampled entries and the number of degrees of freedom in an $n \times n$ matrix of rank $r$
  (oversampling ratio), and $m/n^2$ is the fraction of observed entries.  All the computational results on the right are averaged over five runs.}
\label{tab:result1}
\end{table}

Our computational results are displayed in Table
\ref{tab:result1}. There, we report the run time in seconds, the
number of iterations it takes to reach convergence \eqref{eqn:stop},
and the relative error of the reconstruction
\begin{equation}\label{eqn:relerr}
  \mathrm{relative~
    error}=\|\mtx{X}^{\mathrm{opt}}-\mtx{M}\|_F/\|\mtx{M}\|_F,
\end{equation}
where $\mtx{M}$ is the real unknown matrix.  All of these quantities
are averaged over five runs. The table also gives the percentage of
entries that are observed, namely, $m/n^2$ together with a quantity
that we may want to think as the information oversampling ratio.
Recall that an $n \times n$ matrix of rank $r$ depends upon $d_r :=
r(2n-r)$ degrees of freedom. Then $m/d_r$ is the ratio between the
number of sampled entries and the `true dimensionality' of an $n \times n$
matrix of rank $r$.

The first observation is that the SVT algorithm performs extremely
well in these experiments. In all of our experiments, it takes fewer
than 200 SVT iterations to reach convergence. As a consequence, the
run times are short. As indicated in the table, we note that one
recovers a $1,000\times 1,000$ matrix of rank $10$ in less than a
minute. The algorithm also recovers $30,000 \times 30,000$ matrices of
rank $10$ from about $0.4\%$ of their sampled entries in just about 17
minutes.  In addition, higher-rank matrices are also efficiently
completed: for example, it takes between one and two hours to recover
$10,000 \times 10,000$ matrices of rank $100$ and $20,000\times
20,000$ matrices of rank $50$. We would like to stress that these
numbers were obtained on a modest CPU (1.86GHz). Furthermore, a
Fortran implementation is likely to cut down on these numbers by a
multiplicative factor typically between three and four.

We also check the validity of the stopping criterion \eqref{eqn:stop}
by inspecting the relative error defined in \eqref{eqn:relerr}. The
table shows that the heuristic and nonrigorous analysis of Section
\ref{sec:implementation} holds in practice since the relative
reconstruction error is of the same order as
$\|\mathcal{P}_{\Omega}(\mtx{X}^{\mathrm{opt}}-\mtx{M})\|_F/
\|\mathcal{P}_{\Omega}\mtx{M}\|_F \sim 10^{-4}$. Indeed, the overall
relative errors reported in Table \ref{tab:result1} are all less than
$2\times10^{-4}$.

We emphasized all along an important feature of the SVT algorithm,
which is that the matrices $\mtx{X}^k$ have low rank.  We demonstrate
this fact empirically in Figure \ref{fig:rank}, which plots the rank
of $\mtx{X}^k$ versus the iteration count $k$, and does this for
unknown matrices of size $5,000\times 5,000$ with different ranks. The
plots reveal an interesting phenomenon: in our experiments, the rank
of $\mtx{X}^{k}$ is nondecreasing so that the maximum rank is reached
in the final steps of the algorithm. In fact, the rank of the iterates
quickly reaches the value $r$ of the true rank. After these few
initial steps, the SVT iterations search for that matrix with rank $r$
minimizing the objective functional.  As mentioned earlier, the
low-rank property is crucial for making the algorithm run fast.
\begin{figure}[h]
  \begin{center}
    \begin{tabular}{ccc}
\includegraphics[width=.32\textwidth]{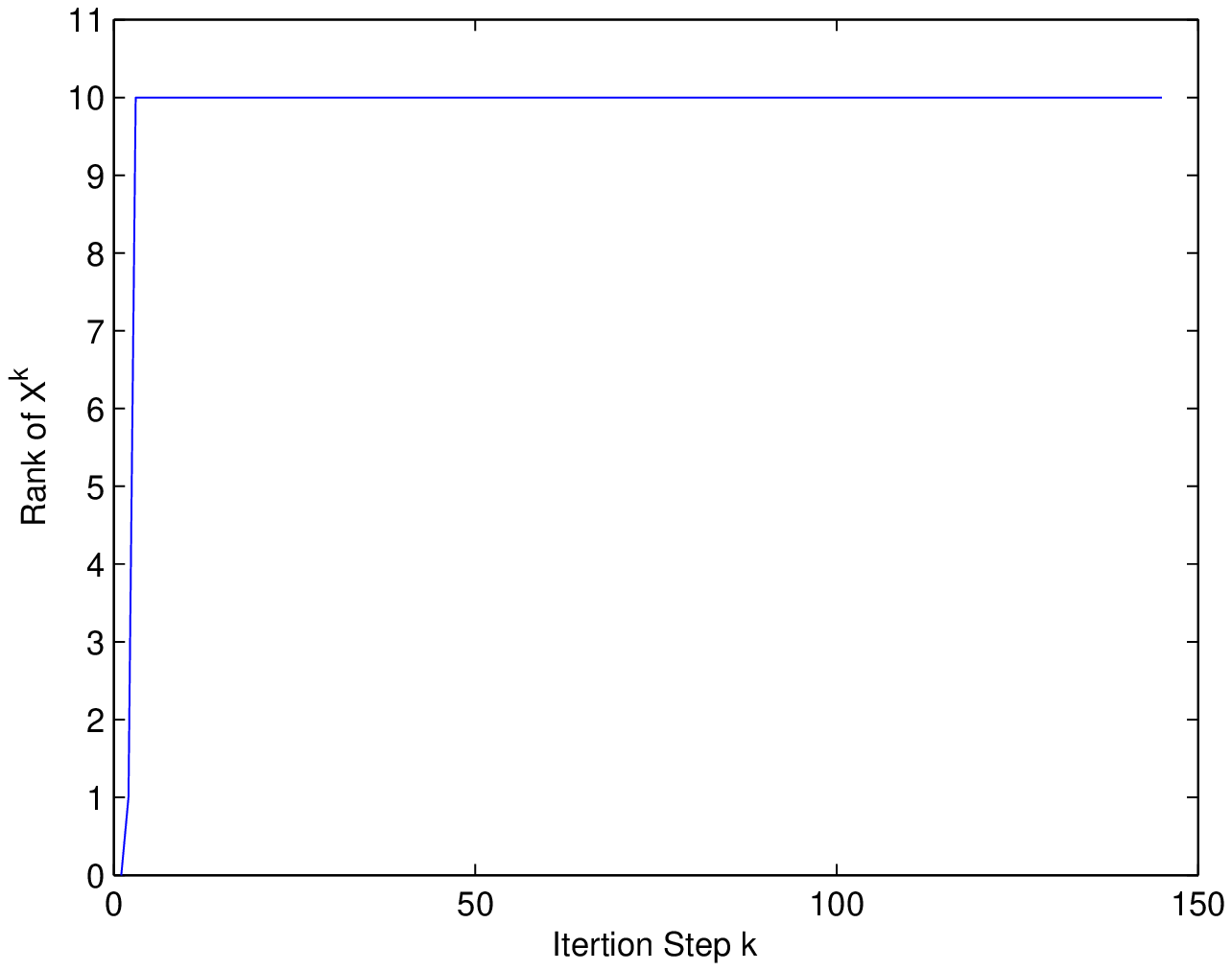} &
\includegraphics[width=.32\textwidth]{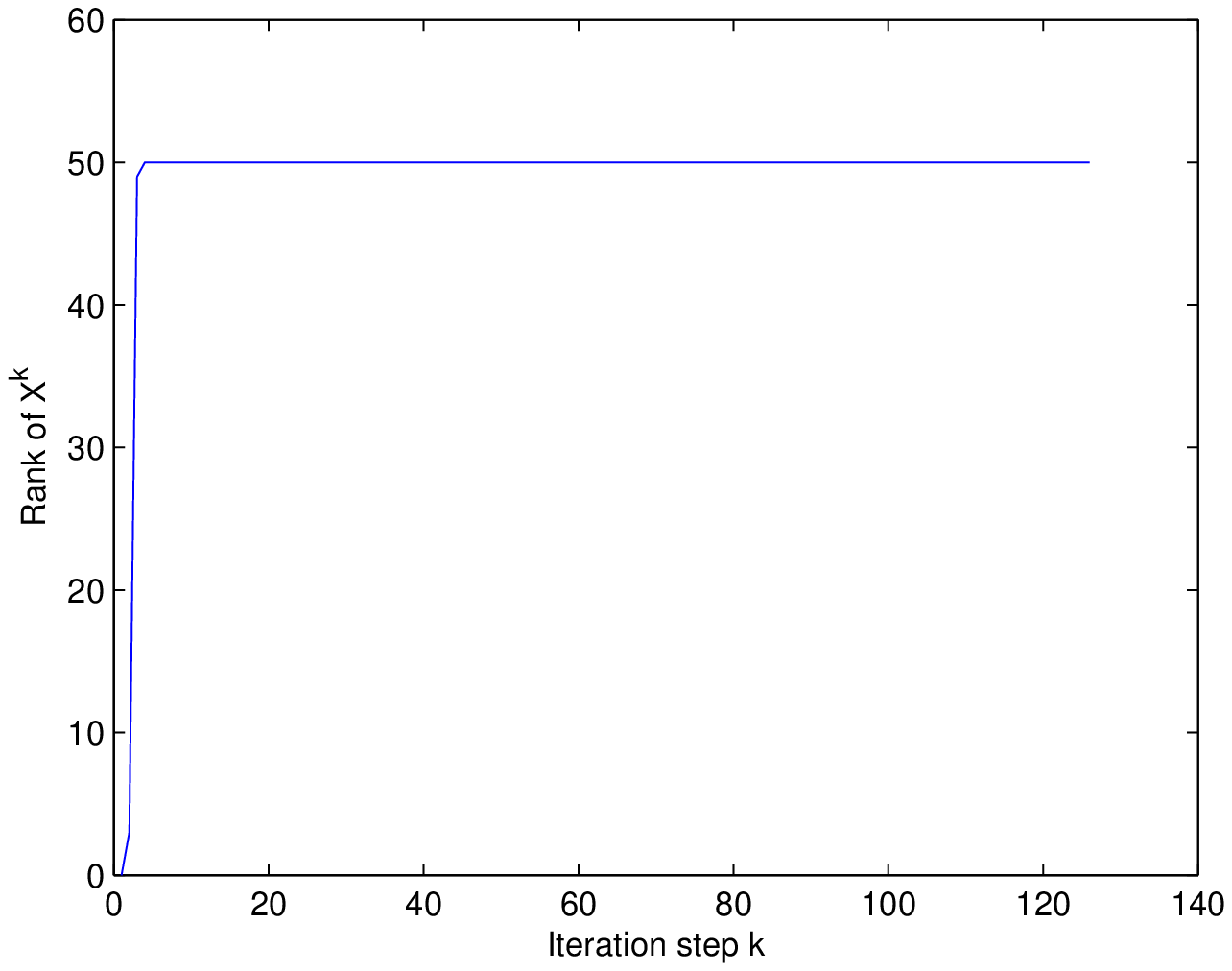} &
\includegraphics[width=.32\textwidth]{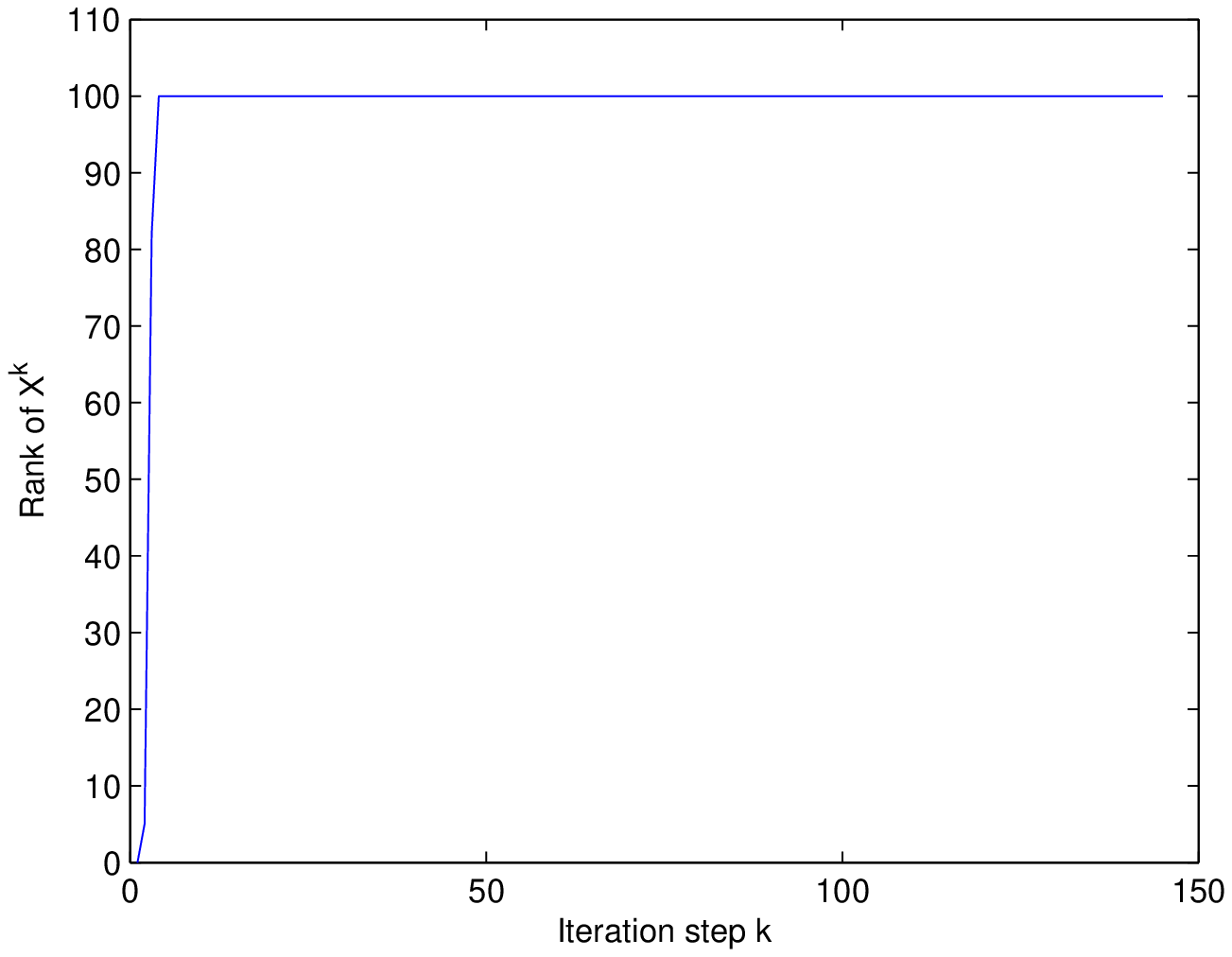} \\
$r = 10$ & $r = 50$ & $r = 100$
\end{tabular}
\end{center}
\caption{Rank of $\mtx{X}^k$ as a function $k$ when the unknown matrix
  $\mtx{M}$ is of size $5,000 \times 5,000$ and of rank $r$.}
\label{fig:rank}
\end{figure}

Finally, we demonstrate the results of the SVT algorithm for matrix
completion from noisy sampled entries. Suppose we observe data from
the model
\begin{equation}
\label{eq:noisy2}
{B}_{ij} = M_{ij} + Z_{ij}, \qquad (i,j) \in \Omega,
\end{equation}
where $\mtx{Z}$ is a zero-mean Gaussian white noise with standard
deviation $\sigma$. We run the SVT algorithm but stop early, as soon
as $\mtx{X}^k$ is consistent with the data and obeys
\begin{equation}\label{eqn:stop1}
\|\mathcal{P}_{\Omega}(\mtx{X}^{k}-\mtx{B})\|_F^2
 \leq (1+\epsilon) \, m\sigma^2,
 \end{equation}
 where $\epsilon$ is a small parameter.  Our reconstruction
 $\hat{\mtx{M}}$ is the first $\mtx{X}^k$ obeying \eqref{eqn:stop1}.
 The results are shown in Table \ref{tab:noise} (the quantities are
 averages of 5 runs). Define the noise ratio as
\[
\|\cP_\Omega(\mtx{Z})\|_F/\|\mathcal{P}_{\Omega}(\mtx{M})\|_F,
\]
and the relative error by \eqref{eqn:relerr}. From Table
\ref{tab:noise}, we see that the SVT algorithm works well as the
relative error between the recovered and the true data matrix is just
about equal to the noise ratio.
\begin{table}
\begin{center}
\begin{tabular}{c|cccc|ccc}\hline
  &\multicolumn{4}{c|}{Unknown matrix $\mtx{M}$}&\multicolumn{3}{c}{Computational results}\\
  \cline{2-8}
  \raisebox{2ex}[0pt]{\small{noise ratio}}
  & size ($n\times n$) & rank ($r$) & $m/d_r$ & $m/n^2$ & time(s)& \# iters & relative error\\\hline

 & & 10 & 6 & 0.12 & 10.8 & 51 & $0.78\times10^{-2}$ \\
 $10^{-2}$&$1,000\times 1,000$ & 50 & 4 & 0.39 & 87.7 & 48 & $0.95\times10^{-2}$ \\
 & & 100 & 3 & 0.57 & 216 & 50 & $1.13\times10^{-2}$ \\ \hline

 & & 10 & 6 & 0.12 & 4.0 & 19 & $0.72\times10^{-1}$ \\
 $10^{-1}$&$1,000\times 1,000$ & 50 & 4 & 0.39 & 33.2 & 17 & $0.89\times10^{-1}$ \\
 & & 100 & 3 & 0.57 & 85.2 & 17 & $1.01\times10^{-1}$ \\ \hline

 & & 10 & 6 & 0.12 & 0.9 & 3 & $0.52$ \\
 1&$1,000\times 1,000$ & 50 & 4 & 0.39 & 7.8 & 3 & $0.63$ \\
 & & 100 & 3 & 0.57 & 34.8 & 3 & $0.69$ \\ \hline
\end{tabular}
\end{center}
\caption{Simulation results for noisy data. The computational results are averaged over five runs.}
\label{tab:noise}
\end{table}

\newcommand{\sol}{\hat{\mtx{M}}}
\newcommand{\true}{\mtx{M}}

The theory of low-rank matrix recovery from noisy data is nonexistent
at the moment, and is obviously beyond the scope of this paper. Having
said this, we would like to conclude this section with an intuitive
and nonrigorous discussion, which may explain why the observed
recovery error is within the noise level.  Suppose again that
$\sol$ obeys \eqref{eq:xk-m}, namely,
 \begin{equation}\label{eqn:error}
 \|\cP_\Omega(\sol - \true)\|_F^2 \asymp p
\|\sol - \true\|_F^2.
\end{equation}
As mentioned earlier, one condition for this to happen is that $\true$
and $\sol$ have low rank. This is the reason why it is important to
stop the algorithm early as we hope to obtain a solution which is both
consistent with the data and has low rank (the limit of the SVT
iterations, $\lim_{k \goto \infty} \mtx{X}^k$, will not generally have
low rank since there may be no low-rank matrix matching the noisy
data). From
\[
\|\cP_\Omega(\sol - \true)\|_F \le \|\cP_\Omega(\sol-\mtx{B})\|_F +
\|\cP_\Omega({\mtx{B}}-\mtx{M})\|_F,
\]
and the fact that both terms on the right-hand side are on the order
of $\sqrt{m\sigma^2}$, we would have $p \|\sol - \true\|_F^2 = O(m
\sigma^2)$ by \eqref{eqn:error}. In particular, this would give that
the relative reconstruction error is on the order of the noise
ratio since $\|\cP_\Omega(\true)\|_F^2 \asymp p \|\true\|_F^2$---as
observed experimentally.

\subsubsection{Linear inequality constraints} 

We now examine the speed at which one can solve similar problems with
linear inequality constraints instead of linear equality
constraints. We assume the model \eqref{eq:noisy2}, where the matrix
$\true$ of rank $r$ is sampled as before, and solve the problem
\eqref{eq:DS} by using \eqref{eqn:iterDS2}. We formulate the
inequality constraints in \eqref{eq:DS} with $E_{ij} = \sigma$ so that
one searches for a solution $\sol$ with minimum nuclear norm among all
those matrices whose sampled entries deviate from the observed ones by
at most the noise level $\sigma$.\footnote{This may not be
  conservative enough from a statistical viewpoint but this works well
  in this case, and our emphasis here is on computational rather than
  statistical issues.}  In this experiment, we adjust $\sigma$ to be
one tenth of a typical absolute entry of $\mtx{M}$, i.e.~$\sigma = 0.1
\, \sum_{ij \in \Omega} |M_{ij}|/m$, and the noise ratio as defined
earlier is 0.780. We set $n = 1,000$, $r = 10$, and the number $m$ of
sampled entries is five times the number of degrees of freedom,
i.e.~$m = 5 d_r$. Just as before, we set $\tau = 5n$, and choose a
constant step size $\delta = 1.2p^{-1}$.

\begin{figure}[h]
  \begin{center}
    \begin{tabular}{cc}
\includegraphics[width=.38\textwidth]{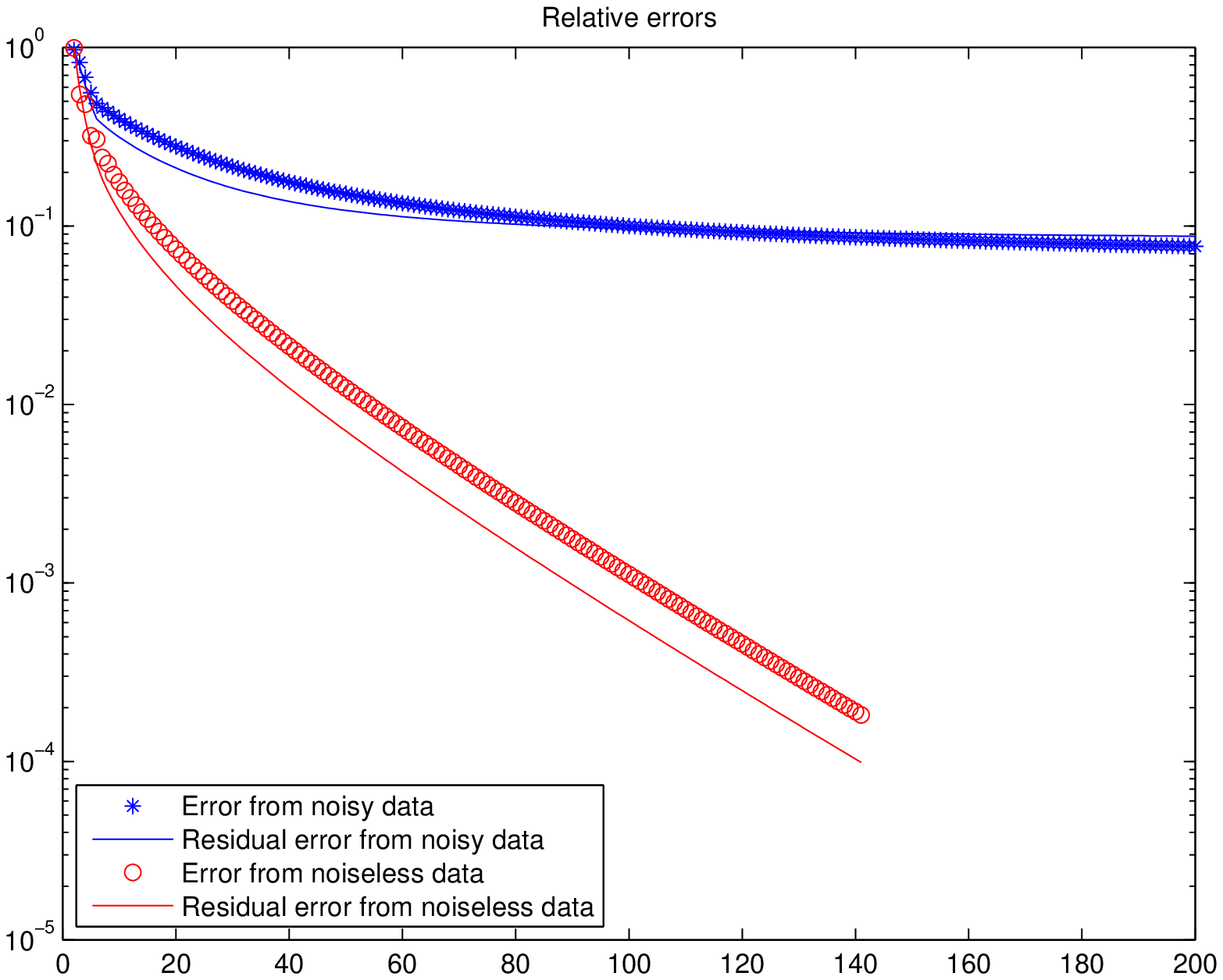} &
\includegraphics[width=.38\textwidth]{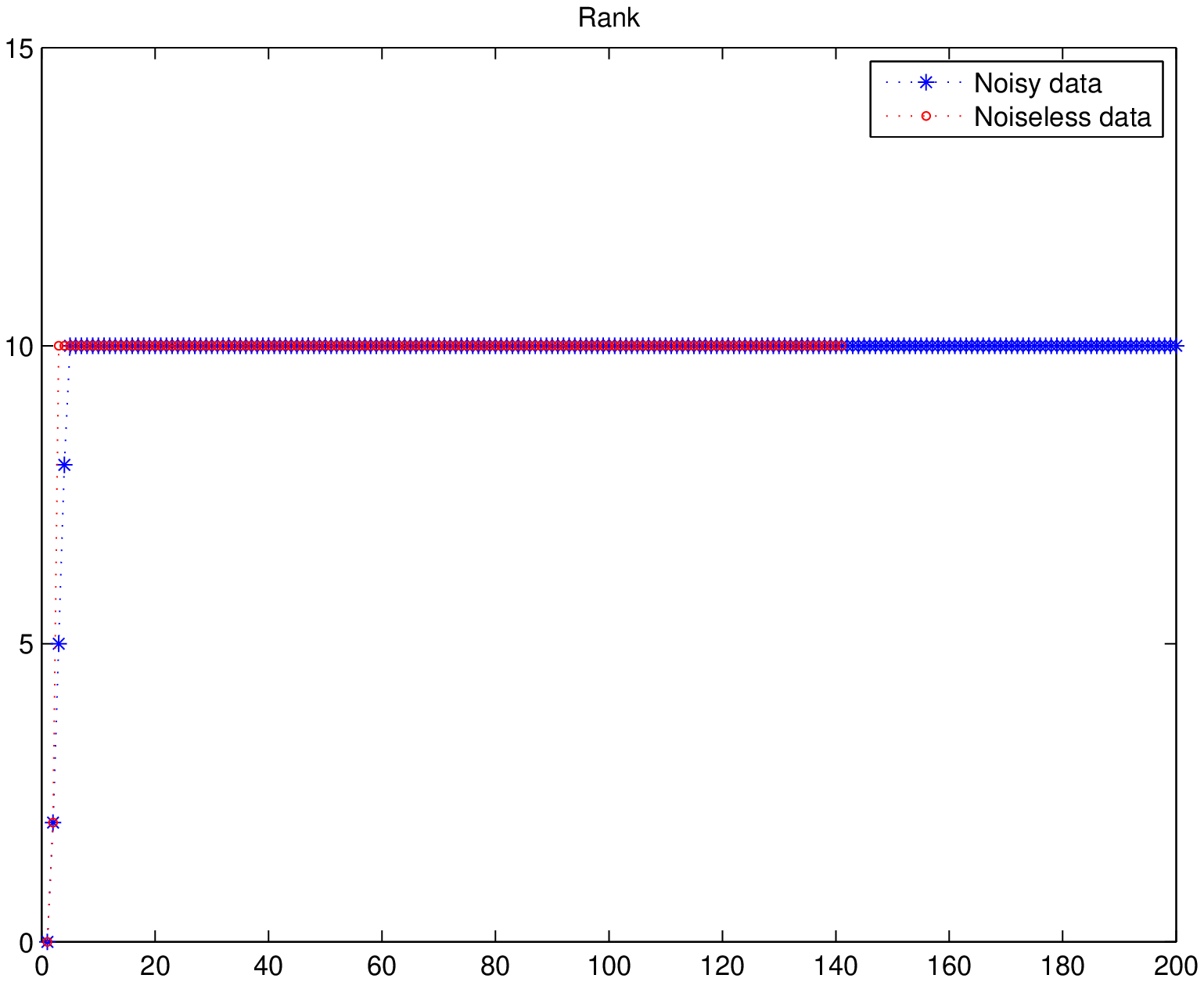} \\
(a) & (b)\\
\includegraphics[width=.38\textwidth]{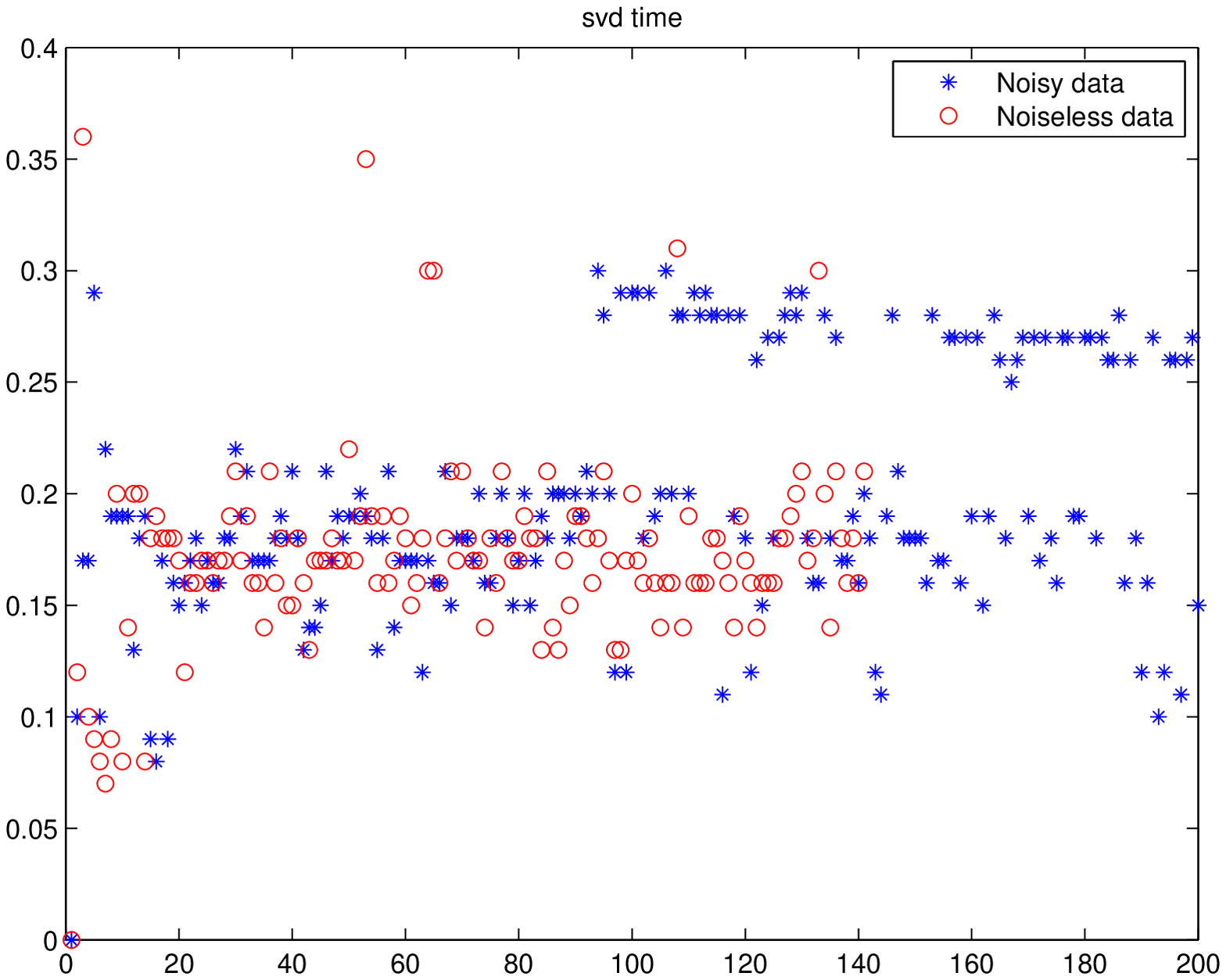} & \\
(c) & 
\end{tabular}
\end{center}
\caption{Computational results of the algorithm applied to noisy
  (linear inequality constraints as in \eqref{eq:DS}) and noiseless
  data (equality constraints). The blue (resp.~red) color is used for
  the noisy (resp.~noiseless) experiment.  (a) Plot of the
  reconstruction errors from noisy and noiseless data as a function of
  the iteration count. The thin line is the residual relative error
  $\|\cP_\Omega(\mtx{X^k}-\mtx{M})\|_F/\|\cP_\Omega(\mtx{M})\|_F$ and
  the thick line is the overall relative error
  $\|\mtx{X^k}-\mtx{M}\|_F/\|\mtx{M}\|_F$. (b) Rank of the iterates as
  a function of the iteration count. (c) Time it takes to compute the
  singular value thresholding operation as a function of the iteration
  count. The computer here is a single-core 3.00GHz Pentium 4 running
  Matlab 7.2.0.}
\label{fig:DS}
\end{figure}

The results, reported in Figure \ref{fig:DS}, show that the algorithm
behaves just as well with linear inequality constraints. To make this
point, we compare our results with those obtained from noiseless data
(same unknown matrix and sampled locations). In the noiseless case, it
takes about 150 iterations to reach the tolerance $\epsilon = 10^{-4}$
whereas in the noisy case, convergence occurs in about 200 iterations
(Figure \ref{fig:DS}(a)). In addition, just as in the noiseless
problem, the rank of the iterates is nondecreasing and quickly reaches
the true value $r$ of the rank of the unknown matrix $\mtx{M}$ we wish
to recover (Figure \ref{fig:DS}(b)). As a consequence the SVT
iterations take about the same amount of time as in the noiseless case
(Figure \ref{fig:DS}(c)) so that the total running time of the
algorithm does not appear to be substantially different from that in
the noiseless case.

We close by pointing out that from a statistical point of view, the
recovery of the matrix $\mtx{M}$ from undersampled and noisy entries
by the matrix equivalent of the Dantzig selector appears to be
accurate since the relative error obeys $\|\sol-\true\|_F/\|\true\|_F =
0.0769$ (recall that the noise ratio is about $0.08$).


\section{Discussion}
\label{sec:discussion}

This paper introduced a novel algorithm, namely, the singular value
thresholding algorithm for matrix completion and related nuclear norm
minimization problems.  This algorithm is easy to implement and
surprisingly effective both in terms of computational cost and
storage requirement when the minimum nuclear-norm solution is also
the lowest-rank solution.  We would like to close this paper by
discussing a few open problems and research directions related to this
work.

Our algorithm exploits the fact that the sequence of iterates
$\{\mtx{X}^k\}$ have low rank when the minimum nuclear solution has
low rank.  An interesting question is whether one can prove (or
disprove) that in a majority of the cases, this is indeed the case.

It would be interesting to explore other ways of computing
$\cD_\tau(\mtx{Y})$---in words, the action of the singular value shrinkage
operator. Our approach uses the Lanczos bidiagonalization algorithm
with partial reorthogonalization which takes advantages of sparse
inputs but other approaches are possible. We mention two of them.
\begin{enumerate}
\item A series of papers have proposed the use of randomized procedures
  for the approximation of a matrix $\mtx{Y}$ with a matrix $\mtx{Z}$ of
  rank $r$ \cite{RokhlinQR1,RokhlinQR2}. When this approximation
  consists of the truncated SVD retaining the part of the expansion
  corresponding to singular values greater than $\tau$, this can be
  used to evaluate $\cD_\tau(\mtx{Y})$. Some of these algorithms are
  efficient when the input $\mtx{Y}$ is sparse \cite{RokhlinQR1}, and
  it would be interesting to know whether these methods are fast and
  accurate enough to be used in the SVT iteration \eqref{eqn:iter}.

\item A wide range of iterative methods for computing matrix functions
  of the general form $f(\mtx{Y})$ are available today, see
  \cite{Higham} for a survey. A valuable research direction is to
  investigate whether some of these iterative methods, or other to be
  developed, would provide powerful ways for computing
  $\cD_\tau(\mtx{Y})$.
\end{enumerate}

In practice, one would like to solve \eqref{eqn:minnuc+fro} for large
values of $\tau$. However, a larger value of $\tau$ generally means a
slower rate of convergence. A good strategy might be to start with
a value of $\tau$, which is large enough so that \eqref{eqn:minnuc+fro}
admits a low-rank solution, and at the same time for which the
algorithm converges rapidly. One could then use a continuation method
as in \cite{Continuation} to increase the value of $\tau$ sequentially
according to a schedule $\tau_0, \tau_1, \ldots$, and use the solution
to the previous problem with $\tau = \tau_{i-1}$ as an initial guess
for the solution to the current problem with $\tau = \tau_i$ (warm
starting). We hope to report on this in a separate paper.

\small

\subsection*{Acknowledgments}
J-F.~C.~is supported by the Wavelets and Information Processing
Programme under a grant from DSTA, Singapore. E.~C.~is partially
supported by the Waterman Award from the National Science Foundation
and by an ONR grant N00014-08-1-0749. Z.~S.~is supported in part by
Grant R-146-000-113-112 from the National University of
Singapore. E.~C.~would like to thank Benjamin Recht and Joel Tropp for
fruitful conversations related to this project, and Stephen Becker for
his help in preparing the computational results of Section 5.2.2.

\bibliographystyle{abbrv}

\begin{thebibliography}{1}

\bibitem{Abernethy06}
J.~Abernethy, F.~Bach, T.~Evgeniou, and J.-P. Vert.
\newblock Low-rank matrix factorization with attributes.
\newblock Technical Report N24/06/MM, Ecole des Mines de Paris, 2006.

\bibitem{NetflixPrize}
ACM SIGKDD and Netflix.
\newblock {\em Proceedings of KDD Cup and Workshop}, 2007.
\newblock Proceedings available online at
  \url{http://www.cs.uic.edu/~liub/KDD-cup-2007/proceedings.html}.

\bibitem{Amit07}
Y.~Amit, M.~Fink, N.~Srebro, and S.~Ullman.
\newblock Uncovering shared structures in multiclass classification.
\newblock In {\em Proceedings of the Twenty-fourth International Conference on
  Machine Learning}, 2007.

\bibitem{Argyriou07}
A.~Argyriou, T.~Evgeniou, and M.~Pontil.
\newblock Multi-task feature learning.
\newblock In {\em Neural Information Processing Systems}, 2007.


\bibitem{BBAC:ECCV:04}
J.~Bect, L.~Blanc-F{\'e}raud, G.~Aubert, and A.~Chambolle,
\newblock A $\ell_1$ unified variational framework for image restoration,
\newblock in {\em Proc. Eighth Europ. Conf. Comput. Vision}, 2004.


\bibitem{BoydBook} S. Boyd, and L. Vandenberghe.
\newblock {\em Convex Optimization}.
\newblock Cambridge University Press, 2004.

\bibitem{CCSS:SISC:08}
J.-F. Cai, R.~Chan, L.~Shen, and Z.~Shen.
\newblock Restoration of chopped and nodded images by framelets.
\newblock {\em SIAM J. Sci. Comput.}, 30(3):1205--1227, 2008.

\bibitem{CCS:ACHA:08}
J.-F. Cai, R.~H. Chan, and Z.~Shen.
\newblock A framelet-based image inpainting algorithm.
\newblock {\em Appl. Comput. Harmon. Anal.}, 24(2):131--149, 2008.

\bibitem{COS:XXX:08:3}
J.-F. Cai, S.~Osher, and Z.~Shen.
\newblock {\em Convergence of the Linearized {B}regman Iteration for
  $\ell_1$-norm Minimization}, 2008.
\newblock UCLA CAM Report (08-52).

\bibitem{COS:XXX:08}
J.-F. Cai, S.~Osher, and Z.~Shen.
\newblock {\em Linearized {B}regman Iterations for Compressed Sensing}, 2008.
\newblock Math. Comp., to appear; see also UCLA CAM Report (08-06).

\bibitem{COS:XXX:08:2}
J.-F. Cai, S.~Osher, and Z.~Shen.
\newblock {\em Linearized {B}regman Iterations for Frame-Based Image
  Deblurring}, 2008.
\newblock preprint.


\bibitem{TVSynthesis}
E.~J. Cand\`es, and F.~Guo.
\newblock New multiscale transforms, minimum total variation synthesis:
  Applications to edge-preserving image reconstruction.
\newblock {\em Signal Processing}, 82:1519--1543, 2002.

\bibitem{CR:XXX:08}
E.~J. Cand{\`e}s and B.~Recht.
\newblock {\em Exact Matrix Completion via Convex Optimization}, 2008.

\bibitem{CR:IP:07}
E.~J. Cand{\`e}s and J.~Romberg.
\newblock Sparsity and incoherence in compressive sampling.
\newblock {\em Inverse Problems}, 23(3):969--985, 2007.

\bibitem{CRT:TIT:06}
E.~J. Cand{\`e}s, J.~Romberg, and T.~Tao.
\newblock Robust uncertainty principles: exact signal reconstruction from
  highly incomplete frequency information.
\newblock {\em IEEE Trans. Inform. Theory}, 52(2):489--509, 2006.

\bibitem{CT:TIT:05}
E.~J. Cand{\`e}s and T.~Tao.
\newblock Decoding by linear programming.
\newblock {\em IEEE Trans. Inform. Theory}, 51(12):4203--4215, 2005.

\bibitem{CT:TIT:06}
E.~J. Cand{\`e}s and T.~Tao.
\newblock Near-optimal signal recovery from random projections: universal
  encoding strategies?
\newblock {\em IEEE Trans. Inform. Theory}, 52(12):5406--5425, 2006.

\bibitem{DS} E.~J. Cand\`es and T.~Tao.
\newblock The Dantzig selector: statistical estimation when $p$ is
  much larger than $n$.
\newblock {\em Annals of Statistics} 35:2313--2351, 2007.

\bibitem{CS:NM:07}
A.~Chai and Z.~Shen.
\newblock Deconvolution: A wavelet frame approach.
\newblock {\em Numer. Math.}, 106(4):529--587, 2007.

\bibitem{CCSS:SISC:03}
R.~H. Chan, T.~F. Chan, L.~Shen, and Z.~Shen.
\newblock Wavelet algorithms for high-resolution image reconstruction.
\newblock {\em SIAM J. Sci. Comput.}, 24(4):1408--1432 (electronic), 2003.

\bibitem{ChenSuter} P. Chen, and D.~Suter.
\newblock Recovering the
  missing components in a large noisy low-rank matrix: application to
  {SFM} source.
\newblock {\em IEEE Transactions on Pattern Analysis
    and Machine Intelligence}, 26(8):1051-1063, 2004.

\bibitem{CW:MMS:05}
P.~L. Combettes and V.~R. Wajs.
\newblock Signal recovery by proximal forward-backward splitting.
\newblock {\em Multiscale Model. Simul.}, 4(4):1168--1200 (electronic), 2005.

\bibitem{DO:XXX:07}
J.~Darbon and S.~Osher.
\newblock {\em Fast discrete optimization for sparse approximations and
  deconvolutions}, 2007.
\newblock preprint.

\bibitem{DDD:CPAM:04}
I.~Daubechies, M.~Defrise, and C.~De~Mol.
\newblock An iterative thresholding algorithm for linear inverse problems with
  a sparsity constraint.
\newblock {\em Comm. Pure Appl. Math.}, 57(11):1413--1457, 2004.

\bibitem{DTV:IPI:07}
I.~Daubechies, G.~Teschke, and L.~Vese.
\newblock Iteratively solving linear inverse problems under general convex
  constraints.
\newblock {\em Inverse Probl. Imaging}, 1(1):29--46, 2007.


\bibitem{Don:TIT:06}
D.~L. Donoho.
\newblock Compressed sensing.
\newblock {\em IEEE Trans. Inform. Theory}, 52(4):1289--1306, 2006.

\bibitem{ESQD:ACHA:05}
M.~Elad, J.-L. Starck, P.~Querre, and D.~L. Donoho.
\newblock Simultaneous cartoon and texture image inpainting using morphological
  component analysis ({MCA}).
\newblock {\em Appl. Comput. Harmon. Anal.}, 19(3):340--358, 2005.


\bibitem{FSM:CJ:07}
M.~J.~Fadili, J.-L.~Starck, and F.~Murtagh.
\newblock Inpainting and zooming using sparse representations.
\newblock {\em The Computer Journal}, to appear.


\bibitem{FazelThesis}
M.~Fazel.
\newblock {\em Matrix Rank Minimization with Applications}.
\newblock PhD thesis, Stanford University, 2002.

\bibitem{fazelRank}
M.~Fazel, H.~Hindi, and S.~Boyd,
\newblock Log-det heuristic for matrix rank minimization with applications to
  {H}ankel and {E}uclidean distance matrices.
\newblock in {\em Proc. Am. Control Conf.}, June 2003.

\bibitem{Nowak_EM} M.~Figueiredo, and R.~Nowak,
\newblock An {EM} algorithm for wavelet-based image restoration.
\newblock {\em IEEE Transactions on Image Processing}, 12(8):906--916,
2003.

\bibitem{GO:XXX:08}
T.~Goldstein and S.~Osher.
\newblock {\em The Split {B}regman Algorithm for L1 Regularized Problems},
  2008.
\newblock UCLA CAM Reprots (08-29).


\bibitem{YinFPC} E.~T. Hale, W. Yin, and Y. Zhang.  \newblock {\em
    Fixed-point continuation for l1-minimization: methodology and
    convergence}.  \newblock 2008.  \newblock preprint.

\bibitem{Higham}
N.~J.~Higham.
\newblock {\em Functions of Matrices: {Theory} and Computation}.
\newblock Society for Industrial and Applied Mathematics,
Philadelphia, PA, USA, 2008.

\bibitem{HL:BOOK:93}
J.-B. Hiriart-Urruty and C.~Lemar{\'e}chal.
\newblock {\em Convex analysis and minimization algorithms. {I}}, volume 305 of
  {\em Grundlehren der Mathematischen Wissenschaften [Fundamental Principles of
  Mathematical Sciences]}.
\newblock Springer-Verlag, Berlin, 1993.
\newblock Fundamentals.

\bibitem{Lar:Propack}
R.~M. Larsen, \newblock {\em PROPACK -- Software for large and
sparse SVD calculations},
\newblock Available from
\url{http://sun.stanford.edu/~rmunk/PROPACK/}.

\bibitem{Lew:MP:03}
A.~S. Lewis.
\newblock The mathematics of eigenvalue optimization.
\newblock {\em Math. Program.}, 97(1-2, Ser. B):155--176, 2003.
\newblock ISMP, 2003 (Copenhagen).

\bibitem{RokhlinQR2}
E.~Liberty, F.~Woolfe, P.-G.~Martinsson, V.~Rokhlin, and M.~Tygert.
\newblock Randomized algorithms for the low-rank approximation of
            matrices.
\newblock {\em Proc. Natl. Acad. Sci. USA}, 104(51): 20167--20172, 2007.

\bibitem{Lintner}
S.~Lintner, and F. Malgouyres.
\newblock Solving a variational image restoration model
which involves $\ell_\infty$ constraints.
\newblock {\em Inverse Problems}, 20:815--831, 2004.

\bibitem{VandenbergheNuc}
Z. Liu, and L. Vandenberghe.
\newblock Interior-point method for nuclear norm approximation with
application to system identification.
\newblock submitted to {\em Mathematical Programming}, 2008.

\bibitem{RokhlinQR1}
P.-G.~Martinsson, V.~Rokhlin, and M.~Tygert.
\newblock A randomized algorithm for the approximation of matrices
\newblock Department of Computer Science, Yale University, New Haven,
CT, Technical Report 1361, 2006.

\bibitem{Mesbahi97}
M.~Mesbahi and G.~P. Papavassilopoulos.
\newblock On the rank minimization problem over a positive semidefinite linear
  matrix inequality.
\newblock {\em IEEE Transactions on Automatic Control}, 42(2):239--243, 1997.

\bibitem{OBGXY:MMS:05}
S.~Osher, M.~Burger, D.~Goldfarb, J.~Xu, and W.~Yin.
\newblock An iterative regularization method for total variation-based image
  restoration.
\newblock {\em Multiscale Model. Simul.}, 4(2):460--489 (electronic), 2005.

\bibitem{ODY:XXX:08}
S.~Osher, Y.~Mao, B.~Dong, and W.~Yin.
\newblock {\em Fast Linearized Bregman Iteration for Compressed Sensing and
  Sparse Denoising}, 2008.
\newblock UCLA CAM Reprots (08-37).

\bibitem{Recht07}
B.~Recht, M.~Fazel, and P.~Parrilo.
\newblock Guaranteed minimum rank solutions of matrix equations via nuclear
  norm minimization.
\newblock 2007.
\newblock Submitted to {\em SIAM Review}.


\bibitem{SDC:AA:03}
J.-L.~Starck, D.~L.~Donoho, and E.~J.~Cand{\`e}s,
\newblock Astronomical image representation by the curvelet
   transform.
\newblock {\em Astronom. and Astrophys.}, 398:785--800, 2003.

\bibitem{TTT:SDPT3}
K.~C. Toh, M.~J. Todd, and R.~H. T\"{u}t\"{u}nc\"{u}.
\newblock {\em SDPT3 -- a MATLAB software package for semidefinite-quadratic-linear
programming},
\newblock Available from
\url{http://www.math.nus.edu.sg/~mattohkc/sdpt3.html}.

\bibitem{Tomasi}
C. Tomasi and T. Kanade.
\newblock Shape and motion from image streams under orthography: a
  factorization method.
\newblock {\em International Journal of Computer Vision},
  9(2):137--154, 1992.

\bibitem{Wat:LAA:92}
G.~A. Watson.
\newblock Characterization of the subdifferential of some matrix norms.
\newblock {\em Linear Algebra Appl.}, 170:33--45, 1992.

\bibitem{Continuation}
S.~J.~Wright, R.~Nowak, and M.~Figueiredo.
\newblock Sparse reconstruction by separable approximation.
\newblock Submitted for publication, 2007.


\bibitem{YOGD:SIIMS:08}
W.~Yin, S.~Osher, D.~Goldfarb, and J.~Darbon.
\newblock {B}regman iterative algorithms for $\ell_1$-minimization with
  applications to compressed sensing.
\newblock {\em SIAM J. Imaging Sci.}, 1(1):143--168, 2008.



\end{thebibliography}

\end{document}